
%

\documentclass{amsart}

\newtheorem{theorem}{Theorem}[section]

\newtheorem{proposition}{Proposition}[section]
\usepackage{amsmath,amssymb,amsthm}
\theoremstyle{definition}
\newtheorem{definition}[theorem]{Definition}
\newtheorem{example}[theorem]{Example}

\newtheorem{corollary}[theorem]{Corollary}
\theoremstyle{remark}
\newtheorem{remark}[theorem]{Remark}
\usepackage{enumitem}
\numberwithin{equation}{section}
\bibliographystyle{amsplain}
\begin{document}
	
	\title{Weighted Coordinates Poset Block Codes}
	
	
	\author{Atul Kumar Shriwastva}
	\address{{Department of Mathematics, National Institute of Technology Warangal, Hanamkonda, Telangana 506004, India}}
	\email{shriwastvaatul@student.nitw.ac.in}
	\thanks{}
	
	\author{R. S. Selvaraj}
	\address{{Department of Mathematics, National Institute of Technology Warangal, Hanamkonda, Telangana 506004, India}}
	\email{rsselva@nitw.ac.in}
	\thanks{}
	
	\subjclass[2010]{Primary: 94B05, 06A06; Secondary:  15A03}
	
	\keywords{Error block codes, Poset codes, Poset block codes, Weighted coordinates
		poset codes, Weight distribution, Singleton bound, MDS codes}
	
	\date{}
	
	\dedicatory{}		
		\begin{abstract}
			Given $[n]=\{1,2,\ldots,n\}$, a partial order  $\preceq$  on $[n]$, a label map  $\pi : [n] \rightarrow \mathbb{N}$ defined by $\pi(i) = k_i$ with $\sum_{i=1}^{n}\pi (i) = N$,  the direct sum $ \mathbb{F}_{q}^{k_1} \oplus \mathbb{F}_{q}^{k_2}\oplus \ldots \oplus \mathbb{F}_{q}^{k_n} $ of $ \mathbb{F}_q^N $, and a weight function $w$ on  $ \mathbb{F}_q $, we define a poset block metric $d_{(P,w,\pi)}$ on $\mathbb{F}_{q}^{N}$ based on the poset $P=([n],\preceq)$. The metric  $d_{(P,w,\pi)}$ is said to be weighted coordinates poset block metric ($(P,w,\pi)$-metric). It extends the weighted coordinates poset metric ($(P,w)$-metric)  introduced by L. Panek and J. A. Pinheiro and generalizes the poset block metric ($(P,\pi)$-metric) introduced by  M. M. S. Alves et al.
			We determine the complete weight distribution of a $(P,w,\pi)$-space, thereby obtaining it for $(P,w)$-space, $(P,\pi)$-space, $\pi$-space, and $P$-space as special cases. We obtain the Singleton bound for $(P,w,\pi)$-codes and for $(P,w)$-codes as well. 
			In particular, we re-obtain the Singleton bound for  any code with respect to $(P,\pi)$-metric and $P$-metric. Moreover, packing radius and Singleton bound for NRT block codes are found.
		\end{abstract}
		\maketitle

\section{Introduction}
 Niederreiter \cite{hnt} generalized the classical problem of coding theory and  found what could be the largest minimum distance $d$ of any linear code $[n,k,d]$ of length $n$ over the finite field $\mathbb{F}_q$, for any integer $ n > k \geq 1$.       In  \cite{Bru}, Brualdi explored the same problem and introduced metrics based on posets by using partially ordered relations on the set $[n]$, where $[n] = \{1,~2,\ldots,n\}$ represents the coordinate positions of $n$-tuples  in the vector space $\mathbb{F}_q^n$. 
In 2006, block codes of length $N=\pi(1)+  \ldots + \pi(n)$ in $ \mathbb{F}_{q}^N$,  were  introduced by \textit{K. Feng}  \cite{fxh} by using  a label map $\pi$  from $[n] $ to $ \mathbb{N}$ such that $\sum_{i=1}^{n}\pi (i) = N$ and $\mathbb{F}_{q}^{N} = \mathbb{F}_{q}^{\pi(1)}  \oplus \mathbb{F}_{q}^{\pi(2)} \oplus \ldots \oplus \mathbb{F}_{q}^{\pi(n)}$.  
Further,  Alves et al. \cite{Ebc}, introduced  $(P,\pi)$-block codes using partial order relation on the block  positions $[n]$ and subsequently those codes were studied in several research papers (refer \cite{bk}, \cite{bkdn}, \cite{bkdns}, \cite{bkdnsr}, \cite{book},   \cite{nrt block}, \cite{Classification of poset-block }). Block codes have applications in experimental design, high-dimensional numerical integration, and cryptography. A space equipped with  pomset mertics  is a recently introduced one by I. G. Sudha, R. S. Selvaraj \cite{gsrs}  which is  a generalization of Lee space \cite{Leecode}, in particular, and  poset space \cite{Bru}, in general, over $\mathbb{Z}_m$. However, L. Panek \cite{wcps} introduced the  weighted coordinates poset metric very recently (2020) which is a simplified version of the pomset metric that does not use the multiset structure. 
\par We will recall certain basic definitions in order to facilitate the organization of this paper. If $R$ is a ring and $N$ is a positive integer, a map $w : R^N \rightarrow \mathbb{N} \cup \{0\}$ is said to be a weight on $R^N $ if it satisfies the following properties: 
$(a)$  $w(u) \geq 0$;  $u \in R^N $
$(b)$  $w(u) = 0$ iff  $ u = 0$ 
$(c)$  $w(- u)  = w(u) $;  $u \in R^N $ 
$(d)$  $w(u + v)  \leq w(u) + w(v)  $; $u,v \in R^N $.
  \par Given a weight $w$ on $R^N$, if we define a map $d : R^N \times R^N \rightarrow \mathbb{N} \cup \{0\}$ by $d(x, y) \triangleq w(x-y)$, then $d$ is a metric on $R^N$. As the  metric $d$ is determined by a weight, it is invariant by translations: $d(x+z, y+z) = d(x, y)$ for all $x, y, z  \in R^N$. Moreover, the map $w(x) \triangleq d(x, 0) $ is a weight on $R^N$ whenever $d$ is translation-invariant metric. If $w$ is a weight on $R$, then the function $w^N $ defined by $ w^N  \triangleq \sum\limits_{i=1}^{N} w(x_i)$ is a weight on $R^N$ induced by $w$, called additive weight, and the metric $d_{w^N}$ on $R^N$  defined by $ d_{w^N}(x,y)  \triangleq w^N(x-y)$ is an additive metric. 
 \par 
 For example, if $m$ is a  positive integer, the  Lee weight $w_L$ of $a\in \mathbb{Z}_m$ is $\min\{a,  m-a\}$  and the Lee weight of an $n$-tuple $u=(u_1,u_2,\ldots,u_n) \in \mathbb{Z}_m^n$ is $w_L{(x)}   = \sum_{i=1}^{n}{w_L{(u_i)}} $. Lee distance between  $u, v \in \mathbb{Z}_{m}^n$ is  $ d_{L}(u,v) = w_L{(u - v)} $. Support of an $n$-tuple $u=(u_1,u_2,\ldots,u_n) \in \mathbb{F}_q^n$ is defined to be the set $ supp(u)= \{i \in [n] : u_i \neq 0  \}$ and the Hamming weight of $u$ is $w_H(u)= |supp(u)|$. Here both Lee weight and Hamming weight are additive.
 \par Let $P=([n],\preceq)$ be a poset.  An element $j \in A \subseteq P$ is said to be a maximal element of $A$ if there is no $i \in A$ such that $j \preceq i$. An element $j \in A \subseteq P$ is said to be a minimal element of $A$ if there is no $i \in A$ such that $i \preceq j$.  A subset $I$ of $P$ is said to be an ideal if $j \in I$ and $i \preceq j$ imply $i \in I$. For a subset $A$  of $P$, an ideal generated by $A$ is the smallest ideal containing $A$ and is denoted by $\langle A \rangle$.  
Poset weight or $P$-weight of $u \in \mathbb{F}_q^n$   is $ w_P (u)= |\langle supp(u) \rangle|$ and $P$-distance between $u,v \in \mathbb{F}_q^n$ is $ d_{P} (u,v)=  w_{P}   (u-v)$.  
\par Through a label map  $\pi  : [n] \rightarrow \mathbb{N} $ defined as $\pi (i) =k_i $ such that $ \sum\limits_{i=1}^{n} \pi (i) = N  $ and considering  $ \mathbb{F}_q^N $ as the direct sum  $ \mathbb{F}_{q}^{k_1} \oplus \mathbb{F}_{q}^{k_2} \oplus \cdots \oplus \mathbb{F}_{q}^{k_n} $, one can express $x \in \mathbb{F}_q^N$  uniquely as  $x= x_1 \oplus x_2 \oplus \cdots \oplus x_n $ with  $x_i = (x_{i_1},x_{i_2},\ldots,x_{i_{k_i}}) \in \mathbb{F}_{q}^{k_i} $ and can define the block-support  \cite{Ebc},  \cite{fxh} or $\pi$-support of $x$ as $ supp_\pi(x)= \{i \in [n] : 0 \neq  x_i  \in \mathbb{F}_q^{k_i}\}$. 
Now, $\pi$-weight of $x$ is defined as $ w_\pi (x)= |supp_\pi(x)|$ and $\pi$-distance 
between  $x,y \in \mathbb{F}_q^N$ is $ d_\pi (x,y)=  w_\pi (x-y)$. Note that if $k_i =1 ~ \forall ~ i$, the $\pi$-weight and $\pi$-distance becomes Hamming weight and Hamming distance respectively. Now, the poset block weight or  $ (P,\pi)$-weight of $x$ is defined as $ w_{(P,\pi)} (x) \triangleq  |\langle supp_\pi(x) \rangle|$ and $(P,\pi)$-distance between $x,y \in \mathbb{F}_q^N$ is defined as $ d_{(P,\pi)} (x,y) \triangleq  w_{(P,\pi)}   (x-y)$. 
\par By assigning a weight for each coordinate position of a vector  $u \in \mathbb{F}_{q}^{n} $, L. Panek \cite{wcps} introduced what is called as  weighted coordinates poset weight. For this, let $u \in \mathbb{F}_{q}^{n} $, 
 $I_{u}^P = \langle supp(u) \rangle $  and $M_{u}^P$ be the set of all maximal elements in $I_{u}^P$.  If $w$ is  a weight defined on $\mathbb{F}_q$ and $M_w = \max \{w(\alpha) :   \alpha \in \mathbb{F}_q \}$, then the weighted coordinates poset weight or $(P,w)$-weight of  $u $  is  defined as 
$ 	w_{(P,w)}(u) = \sum\limits_{i \in M_{u}^P} w{(u_i)} + \sum\limits_{i \in {I_{u}^{P} \setminus M_{u}^P}} M_w $. The weighted coordinates poset distance or 
 $(P,w)$-distance between   $u,v \in \mathbb{F}_q ^n$ is defined as $ d_{(P,w)}(u,v) \triangleq w_{(P,w)}(u-v) $. If $w(\alpha) = 1$ $\forall$  $ \alpha \in \mathbb{F}_q \setminus 0$, then the $(P,w)$-weight of  $u $  becomes the $P$-weight of  $u $.
\par  In this paper, we introduce weighted coordinates poset block metric (or  $(P,w,\pi)$-metric) for codes of length $N$ over the field $\mathbb{F}_q$. We will  define it in Section ${2}$, by attaching a weight $\tilde{w}^{k_i}$ to each block of length $k_i$ of a codeword where $\tilde{w}^{k_i}$ is a weight on $\mathbb{F}_q^{k_i} $ that depends on a weight  $w$ on $\mathbb{F}_q$.  In Section ${3}$, we will determine the   complete weight distribution of $\mathbb{F}_q ^{N}$   with respect to $(P,w,\pi)$-metric, in general, thereby, facilitating the weight distribution  with respect to  $(P,w)$-metric, $(P,\pi)$-metric, $\pi$-metric, $P$-metric as particular cases. Moreover, we determined  cardinality of $r$-ball with respect to $(P,w,\pi)$-metric and  $(P,w)$-metric.  The Singleton bounds of a code  with respect to $(P,w,\pi)$-metric, $(P,w)$-metric, are  established  in Section ${4}$ and are  re-obtained for any code with respect to $(P,\pi)$-metric, and $P$-metric. In particular, when all the blocks have the same dimension, a necessary condition for a code to be MDS is also found. In Section ${5}$, complete weight distribution, Packing radius, and Singleton bound for NRT block spaces are determined by considering $P$ to be a chain. 
\section{Weighted Coordinates Poset Block Metrics}
	To start with, let  $w$ be a weight on $\mathbb{F}_q$ and $M_w = \max \{w(\alpha) :   \alpha \in \mathbb{F}_q \}$. For a $k \in \mathbb{N}$, and a $v=(v_1, v_2,\ldots,v_k) \in  \mathbb{F}_q^{k} $, we define  $\tilde{w}^{k}{(v)} = \max \{ w(v_i) : 1 \leq i \leq k\}$. Clearly,   $\tilde{w}^{k}$ is a weight on $\mathbb{F}_q^{k} $ induced by the weight $w$. On 	$ \mathbb{F}_{q}^{k_i} $, $1 \leq i \leq n$, we call   $\tilde{w}^{k_i}$, a block weight. Note that  $\tilde{w}^{k}$ is not an additive weight and the metric $d_{\tilde{w}^{k}}$ induced by ${w}$-weight on $\mathbb{F}_q^{k} $ defined as $d_{\tilde{w}^{k}} (u,v) \triangleq \tilde{w}^{k}(u-v)$  is not an additive metric.
\par
  Considering the finite set $[n]=\{1,2,\ldots,n\}$ with $\preceq $ to be a partial order,  the pair $P=([n],\preceq)$ is  a poset.  With a label map  $\pi  : [n] \rightarrow \mathbb{N} $ defined  as $\pi (i) =k_i $ in the previous section such that $ \sum\limits_{i=1}^{n} \pi (i) = N  $, a positive inetger, we have $	\mathbb{F}_{q}^{N} = \mathbb{F}_{q}^{k_1}  \oplus \mathbb{F}_{q}^{k_2} \oplus \ldots \oplus \mathbb{F}_{q}^{k_n} $. Thus, if  $x \in  \mathbb{F}_{q}^{N} $ then $x= x_1 \oplus x_2 \oplus \cdots \oplus x_n $ with  $x_i = (x_{i_1},x_{i_2},\ldots,x_{i_{k_i}}) \in \mathbb{F}_{q}^{k_i} $. Let $I_{x}^{P,\pi} =  \langle supp_{\pi}(x) \rangle $ be the ideal generated by the $\pi$-support of $x$ and $M_{x}^{P,\pi}$ be the set of all maximal elements in $I_{x}^{P,\pi}$.
 \begin{definition}[$(P,w,\pi)$-weight]
   \sloppy{Given a poset $P=([n],\preceq)$, a weight $w$ on $\mathbb{F}_q $, a label map  $\pi $ with a block weight $\tilde{w}^{k_i}$ on  $\mathbb{F}_q^{k_i} $ induced by $w$, the 
   	weighted coordinates poset block weight or 
 	$(P,w,\pi)$-weight of  $x \in \mathbb{F}_q^N$ is defined as
 	\begin{equation*}
 		w_{(P,w,\pi)}(x) \triangleq \sum\limits_{i \in M_{x}^{P,\pi}} \tilde{w}^{k_i}{(x_i)} + \sum\limits_{i \in {I_{x}^{P,\pi} \setminus M_{x}^{P,\pi}}} M_w
 	\end{equation*} 
 The  $(P,w,\pi)$-distance between two vectors  $x,y \in \mathbb{F}_q ^N$ is defined as:
 $ d_{(P,w,\pi)}(x,y) \triangleq  w_{(P,w,\pi)}(x-y)$.}
 \end{definition} 

\begin{theorem}\label{t1}
	The $(P,w,\pi) $-distance is  a metric on $\mathbb{F}_{q}^N$.
\end{theorem}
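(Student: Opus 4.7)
The plan is to reduce to verifying that $w_{(P,w,\pi)}$ is a weight on $\mathbb{F}_q^N$ in the sense of properties (a)--(d) from the preamble, since the associated distance is then automatically a translation-invariant metric. Properties (a), (b), (c) I expect to dispatch quickly: (a) is immediate because $\tilde{w}^{k_i}\ge 0$ and $M_w\ge 0$; (c) follows because $\mathrm{supp}_\pi(-x)=\mathrm{supp}_\pi(x)$ (so $I_{-x}^{P,\pi}=I_x^{P,\pi}$ and $M_{-x}^{P,\pi}=M_x^{P,\pi}$) together with $\tilde{w}^{k_i}(-x_i)=\tilde{w}^{k_i}(x_i)$ coming from property (c) of $w$ on $\mathbb{F}_q$; and (b) follows by noting that for any $x\ne 0$ the set $\mathrm{supp}_\pi(x)$ is nonempty, hence $M_x^{P,\pi}\subseteq\mathrm{supp}_\pi(x)$ is nonempty, and at any such $i$ the block $x_i\ne 0$ gives $\tilde{w}^{k_i}(x_i)>0$.

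The real work is the subadditivity (d), $w_{(P,w,\pi)}(x+y)\le w_{(P,w,\pi)}(x)+w_{(P,w,\pi)}(y)$. For bookkeeping I would introduce the pointwise contribution
\[
g(i,z)=\begin{cases}\tilde{w}^{k_i}(z_i),& i\in M_z^{P,\pi},\\ M_w,& i\in I_z^{P,\pi}\setminus M_z^{P,\pi},\end{cases}
\]
so that $w_{(P,w,\pi)}(z)=\sum_{i\in I_z^{P,\pi}} g(i,z)$, and record the two structural facts $\mathrm{supp}_\pi(x+y)\subseteq\mathrm{supp}_\pi(x)\cup\mathrm{supp}_\pi(y)$ (hence $I_{x+y}^{P,\pi}\subseteq I_x^{P,\pi}\cup I_y^{P,\pi}$) and the note that $i\notin\mathrm{supp}_\pi(y)$ forces $y_i=0$, hence $(x+y)_i=x_i$. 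The goal is then the local inequality
\[
g(i,x+y)\;\le\; g(i,x)\,\mathbf{1}[i\in I_x^{P,\pi}] \;+\; g(i,y)\,\mathbf{1}[i\in I_y^{P,\pi}] \qquad\text{for every } i\in I_{x+y}^{P,\pi};
\]
summing this over $i\in I_{x+y}^{P,\pi}$ and using $\sum_{i\in I_x^{P,\pi}}g(i,x)=w_{(P,w,\pi)}(x)$ (similarly for $y$) yields (d), the $(A\cup B)\setminus C$ terms only helping.

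The local inequality I would prove by cases according to whether $i$ lies in $I_x^{P,\pi}\cap I_y^{P,\pi}$ or in only one ideal, and whether $i\in M_{x+y}^{P,\pi}$. When $i\in M_{x+y}^{P,\pi}$, subadditivity of the block weight $\tilde{w}^{k_i}$ combined with $g(i,z)\ge \tilde{w}^{k_i}(z_i)$ (which holds since either $g(i,z)=\tilde{w}^{k_i}(z_i)$ or $g(i,z)=M_w\ge \tilde{w}^{k_i}(z_i)$) closes the case; when $i\in I_x^{P,\pi}\setminus I_y^{P,\pi}$ the reduction to $(x+y)_i=x_i$ handles both sub-subcases. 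The main obstacle is the case where $i\in I_{x+y}^{P,\pi}\setminus M_{x+y}^{P,\pi}$ lies in $I_x^{P,\pi}\cap I_y^{P,\pi}$: then the left side is $M_w$ but $i$ could be maximal in one of $I_x^{P,\pi}$, $I_y^{P,\pi}$ with a small $\tilde{w}^{k_i}$, so an extra argument is needed. Here is the crux: because $i$ is not maximal in $I_{x+y}^{P,\pi}$, pick $j\in\mathrm{supp}_\pi(x+y)\subseteq I_x^{P,\pi}\cup I_y^{P,\pi}$ with $i\prec j$; if $i\in M_x^{P,\pi}$, then $j\notin I_x^{P,\pi}$, forcing $j\in I_y^{P,\pi}$, which shows $i\in I_y^{P,\pi}\setminus M_y^{P,\pi}$ and hence $g(i,y)=M_w$. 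The symmetric observation covers $i\in M_y^{P,\pi}$, while if $i$ is non-maximal in both ideals we already have $g(i,x)+g(i,y)=2M_w$. With this, every case yields the desired bound, completing (d) and therefore the theorem.
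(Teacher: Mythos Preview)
Your proof is correct and, for the subadditivity step (d), actually more careful than the paper's own argument. The paper proceeds by asserting two set-theoretic inclusions,
\[
M_{x+y}^{P,\pi}\subseteq M_x^{P,\pi}\cup M_y^{P,\pi}
\qquad\text{and}\qquad
I_{x+y}^{P,\pi}\setminus M_{x+y}^{P,\pi}\subseteq (I_x^{P,\pi}\setminus M_x^{P,\pi})\cup(I_y^{P,\pi}\setminus M_y^{P,\pi}),
\]
and from these reads off the split inequalities (\ref{e1}) and (\ref{e2}) whose sum is the desired bound. The second inclusion is valid, but the first is \emph{false} in general: on the chain $1\prec 2$ over $\mathbb{Z}_7$ with Lee weight and $k_1=k_2=1$, taking $x=(3,1)$ and $y=(0,6)$ gives $x+y=(3,0)$, so $M_{x+y}^{P,\pi}=\{1\}$ while $M_x^{P,\pi}\cup M_y^{P,\pi}=\{2\}$; in this same example inequality (\ref{e1}) reads $3\le 2$. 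Your pointwise decomposition via the contributions $g(i,\cdot)$ replaces these global inclusions by a local inequality at each index, and the case analysis you outline---especially the crux observation that an $i\in I_{x+y}^{P,\pi}\setminus M_{x+y}^{P,\pi}$ cannot lie in $M_x^{P,\pi}\cap M_y^{P,\pi}$---is exactly what is needed to close the gap. One small remark: in the case $i\in I_x^{P,\pi}\setminus I_y^{P,\pi}$ with $i\notin M_{x+y}^{P,\pi}$, the phrase ``the reduction to $(x+y)_i=x_i$'' is slightly elliptic; what you really use is that every $j\succeq i$ also lies outside $I_y^{P,\pi}$, so $(x+y)_j=x_j$ for all such $j$, whence $i\notin M_x^{P,\pi}$ and $g(i,x)=M_w$. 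This is implicit in your sketch but worth stating.
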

\begin{proof}
	Though the proof follows similar pattern as in \cite{wcps}, we provide it to facilitate familiarity of the	terminologies we deal with. As  $w$ and $\tilde{w}^{k_i}$ are the weights on $ \mathbb{F}_{q} $ and $ \mathbb{F}_{q}^{k_i} $ respectively,  it is clear that  $	d_{(P,w,\pi)}(x,y) \geq0  $ for every $x, y \in \mathbb{F}_{q}^N$.  Also, $	 d_{(P,w,\pi)}(x,y) = 0 $ iff $I_{x-y}^{P,\pi} = \phi $ iff $x = y $. As $ supp_{\pi} (-x) = supp_{\pi} (x)$ for any $x \in \mathbb{F}_{q}^{N}$, we have  
  $	d_{(P,w,\pi)}(x,y) = d_{(P,w,\pi)}(y,x)$ for all $x , y \in \mathbb{F}_{q}^{N}$. Now  $w_{(P,w,\pi)}(x+y) = \sum\limits_{i \in M_{x+y}^{P,\pi}} \tilde{w}^{k_i}{(x_i + y_i)} + \sum\limits_{i \in {I_{x +y}^{P,\pi} \setminus M_{x+y}^{P,\pi}}} M_w  $. As  $\tilde{w}^{k_i}{(x_i + y_i)}  \leq \tilde{w}^{k_i}{(x_i)}  +  \tilde{w}^{k_i}	{(y_i)}$ and $M_{x+y}^{P,\pi} \subseteq M_{x}^{P,\pi} \cup M_{y}^{P,\pi}$, we have
\begin{equation}\label{e1}
	\sum\limits_{i \in M_{x+y}^{P,\pi}} \tilde{w}^{k_i}	{(x_i + y_i)} \leq \sum\limits_{i \in M_{x}^{P,\pi}} \tilde{w}^{k_i}	{(x_i)}  +  \sum\limits_{i \in M_{y}^{P,\pi}} \tilde{w}^{k_i}	{(y_i)}
\end{equation}  Since 
$ I_{x +y}^{P,\pi} \setminus M_{x+y}^{P,\pi} \subseteq (I_{x }^{P,\pi} \setminus  M_{x}^{P,\pi}) \cup (I_{y}^{P,\pi} \setminus M_{y}^{P,\pi})$ we will have 
\begin{equation}\label{e2}
	\sum\limits_{i \in {I_{x +y}^{P,\pi} \setminus M_{x+y}^{P,\pi}}} M_w \leq \sum\limits_{i \in {I_{x}^{P,\pi} \setminus M_{x}^{P,\pi}}} M_w + \sum\limits_{i \in {I_{y}^{P,\pi} \setminus M_{y}^{P,\pi}}} M_w
\end{equation} 
From  (\ref{e1}) and (\ref{e2}), we have
\begin{equation*}
	w_{(P,w,\pi)}(x+y) \leq 	w_{(P,w,\pi)}(x)  + 	w_{(P,w,\pi)}(y) 
\end{equation*} 
Hence $	d_{(P,w,\pi)}(x,y) =  w_{(P,w,\pi)}(x-z+z-y) \leq w_{(P,w,\pi)}(x-z) +  w_{(P,w,\pi)}(z-y) = d_{(P,w,\pi)}(x,z) + 	d_{(P,w,\pi)}(z,y)  $, for $x,y,z \in \mathbb{F}_{q}^N$.
\end{proof}
Hence, $d_{(P,w,\pi)}$ defines a metric on $ \mathbb{F}_{q}^N $ called as \textit{weighted cordinates poset block metric} or $ (P,w,\pi) $-metric. The pair $ (\mathbb{F}_{q}^N,~d_{(P,w,\pi)} )$ is said to be a  $(P,w,\pi)$-space.
\par  \sloppy{A $ (P,w,\pi) $-block code  $\mathbb{C} $ of length $N$ is a subset of $ (\mathbb{F}_{q}^N,~d_{(P,w,\pi)} )$-space  and 
$ d_{(P,w,\pi)}\mathbb{(C)} = min \{  d_{(P, w,\pi)} {(c_1, c_2)}: c_1, c_2 \in \mathbb{C} \} $ gives the minimum distance of  $\mathbb{C}$. 
If $\mathbb{C}$ is a linear $ (P,w,\pi) $-block code, then   
$ d_{(P,w,\pi)}\mathbb{(C)} = min \{ w_{(P,w,\pi)}(c) : 0 \neq c \in \mathbb{C} \} $.
 As $ w_{(P,w,\pi)}(v) \leq n M_w$ for any $v \in \mathbb{F}_{q}^N $, the minimum distance of a linear code   $\mathbb{C} $ is bounded above by $ n M_w $.}
\par Though it is a more general fact to say that  the $(P,w,\pi) $-distance is  a metric on a free module $R^N$ over a commutative ring $R$ with identity, we focus our discussion by taking $R = \mathbb{F}_{q}$ almost everywhere. The results obtained in the forthcoming sections for 
$\mathbb{F}_{q}^N $ or $\mathbb{Z}_{m}^N $ are even valid for any  free module $R^N$ over the  commutative ring $R$ with identity.
\par We remark that the $(P,w,\pi)$-distance is a metric which includes several classic metrics of coding theory.
\begin{remark}\label{becomes} 
	Now we shal see how or in what manner the weighted coordinates poset block metric generalizes the various poset metrics arrived at so for.
	\begin{enumerate}[label={(\roman*)}]
		\item If $k_i = 1 $ for every $i\in [n]$ then the block support of $v \in \mathbb{F}_q^N$ becomes the usual support of $v \in \mathbb{F}_q^n$ 
		and $(P,w,\pi)$-weight of $v \in \mathbb{F}_q^N$ becomes $(P,w)$-weight of $v \in \mathbb{F}_q^n$. Thus, the $(P,w,\pi)$-space becomes the $(P,w)$-space (as in  \cite{wcps}).
		\item 	If $k_i = 1 $ for every $i\in [n]$ and $w$ is the Hamming weight on $\mathbb{F}_q$ so that  $w(\alpha)=1$ for $0 \neq \alpha \in \mathbb{F}_q$,  then for any  
		$ v = v_1 \oplus v_2 \oplus \ldots \oplus v_n \in \mathbb{F}_q^N$, 
		\begin{align*}
			 \tilde{w}^{k_i}{(v_i)} =
			\max\{w(v_{i_t}) : 1 \leq t \leq k_i\} = \left\{ 
			\begin{array}{ll}
				0, &  \text{if} \ v_{i} = 0\\
				1, &   \text{if} \ v_{i} \neq 0
			\end{array}	  \right\} =  w_{H}(v_{i}),
		\end{align*}
		the Hamming weight of $v_i$ in $\mathbb{F}_q$.
		Thus, the	 $(P,w,\pi)$-space becomes the poset space or $P$-space (as in \cite{Bru}).
		\item 	If $w$ is the Hamming weight  on $\mathbb{F}_q$, then $w(\alpha)=1$ for $0 \neq \alpha \in \mathbb{F}_q$.  Let $v \in \mathbb{F}_q^N$, $v_i = (v_{i_1},v_{i_2}, \ldots,v_{i_{k_i}}) \in \mathbb{F}_{q}^{k_i} $ and $v_{i_{t}} \in  \mathbb{F}_{q}$, $1 \leq t \leq k_i$, so that  $ 	 \tilde{w}^{k_i}{(v_i)} = \max\{w(v_{i_t}) : 1 \leq t \leq k_i\}   = \left\{ 
		\begin{array}{ll}
			0, &  \text{if} \ v_{i} = 0\\
			1, &    \text{if} \ v_{i} \neq 0
		\end{array}	  \right\}  $. Hence,
		\begin{align*}
			w_{(P,w,\pi)}(v) = | M_{v}^{P,\pi} |  + | {I_v}^{P,\pi} \setminus M_{v}^{P,\pi} | = | I_{v}^{P,\pi} | = w_{(P,\pi)}(v). 
		\end{align*} 
		Thus, the  $(P,w,\pi)$-space becomes the $(P,\pi)$-space  (as in \cite{Ebc}).
		\item 	If $w$ is the Hamming weight on $\mathbb{F}_q$ so that  $w(\alpha)=1$ for $0 \neq \alpha \in \mathbb{F}_q$, and $P$ is an antichain, then for any $v \in \mathbb{F}_q^N$, 
		$ 	 \tilde{w}^{k_i}{(v_i)} =  \left\{ 
		\begin{array}{ll}
			0, &  \text{if} \ v_{i} = 0\\
			1, &   \text{if} \ v_{i} \neq 0
		\end{array}	  \right\} $, $I_v^{P,\pi} = supp_{\pi} (v)$ and hence,
		$		w_{(P,w,\pi)}(v) = | M_{v}^{P,\pi} |  + | {I_v}^{P,\pi} \setminus M_{v}^{P,\pi} | = | supp_{\pi}(v) |= w_{\pi}(v) 
		$.
		Thus, the  $(P,w,\pi)$-space becomes the $\pi$-space or $( \mathbb{F}_q^N,d_{\pi})$-space  (as in \cite{fxh}). 
	\end{enumerate} 
\end{remark}
Throughout  the paper, let $ \mathcal{I}(P) = \{ I \subseteq P: I  \text{~is an ideal} \}$ denote the collection of all ideals in $P$.  Let $\mathcal{I}^{i}$ be its sub-collection of all ideals whose cardinality is $i$. Let 
$\mathcal{I}_{j}^{i}$ denote the collection of all ideals $I \in \mathcal{I}(P)$ with cardinality $i$ having $j$ maximal elements. Clearly, $\mathcal{I}_{j}^{i} \subseteq \mathcal{I}^{i}$ and $\cup_{j=1}^{i} \mathcal{I}_{j}^{i} =\mathcal{I}^{i} $, for $1 \leq i \leq n$. Given an ideal $I \in \mathcal{I}_{j}^{i}$, let $Max(I) = \{i_1, i_2, \ldots, i_j\}$ be the set of all maximal elements of $I$ and $I \setminus Max(I) = \{ l_1, l_2,\ldots, l_{i-j}\}$ be the set of all non-maximal elements of $I$, if any. Here $i,j$ are integers such that $ 1 \leq i \leq n $ and $ 1 \leq j \leq i$. 
\par  The $(P,w,\pi)$-ball centered at a point $y \in \mathbb{F}_q^N$ with radius $r$ is the set $	B_{r}(y)=\{x \in \mathbb{F}_{q}^{N} : d_{(P,w,\pi)}(y,x) \leq r\}$. The $(P,w,\pi)$-sphere centered at $y$  with radius $r$ is  $S_{r}(y)=\{x \in \mathbb{F}_{q}^{N} : d_{(P,w,\pi)}(y,x) = r\}$. $B_r(y)$ and $S_r(y)$ are also called as $r$-ball and $r$-sphere respectively. As the $d_{(P,w,\pi)}$-metric is translation invariant, we have  $B_{r}(y) = y + B_{r}(0)$ where $0 \in \mathbb{F}_q^N$. Clearly, $ | B_{r}(0) | =  1 + \sum\limits_{t=1}^{r} | S_{t}(0) |$ and $ | S_{t}(0) |$ is equal to the number of $x \in \mathbb{F}_{q}^{N}$ such that $w_{(P,w,\pi)}(x)=t$.   Therefore, to determine the cardinality of an $r$-ball, first we shall  find the $(P,w,\pi)$-weight distribution of $\mathbb{F}_{q}^{N}$. 
\section{Weight Distribution of $(P,w,\pi)$-space}
For a weight $w$ on $ \mathbb{F}_{q} $,  the sets  $D_r = \{\alpha \in \mathbb{F}_q : w(\alpha) = r\}$, for $0 \leq r \leq M_w $ partition $ \mathbb{F}_{q} $ according to the distribution of weights $w$ of elements in $ \mathbb{F}_{q} $. Recall that,   $ \tilde{w}^{k} $ is a weight on   $ \mathbb{F}_{q}^{k} $ defined as  $ \tilde{w}^{k}{(u)}=\max\{w(u_{i}) : 1 \leq i \leq k\}$ for $u=(u_{1},u_{2},~\ldots,u_{k}) \in \mathbb{F}_{q}^{k} $.  For each $0 \leq r \leq M_w $, we shall define:
\begin{align*}
	D_{r}^{k}= \{u=(u_{1},u_{2},\ldots,u_{{k}}) \in \mathbb{F}_{q}^{k}  :  \tilde{w}^{k}(u)=r  \}
\end{align*}
to be a subset of $ \mathbb{F}_{q}^{k} $ so that $D_{r}^{k}$ $(0 \leq r \leq M_w )$ partitons $ \mathbb{F}_{q}^{k} $ according to the
distribution of weights $\tilde{w}^{k}$ of elements in 
$ \mathbb{F}_{q}^{k} $. 
Thus, we have the following:
\begin{proposition} \label{D_r^j}
	For a given positive integer $ k $, $|D_r^{k}|=( \sum\limits_{i=0}^{r} |D_{i}|)^{k}-( \sum\limits_{i=0}^{r-1} |D_{i}|)^{k}$ for $0 \leq r \leq M_w $. In particular, 
	$|D_{M_w}^{k}|=q^{k}-(q- |D_{M_w} |)^{k}$.
\end{proposition}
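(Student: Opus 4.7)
The plan is to rewrite $D_r^k$ as a set-theoretic difference of two easily-counted sets, and then use the product rule coordinate-by-coordinate.

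First I would set, for each integer $s$ with $0 \leq s \leq M_w$, the auxiliary set
\begin{equation*}
E_s^k = \{u = (u_1,\ldots,u_k) \in \mathbb{F}_q^k : w(u_i) \leq s \text{ for all } 1 \leq i \leq k\}.
\end{equation*}
Because $\tilde{w}^k(u) = \max_i w(u_i)$, the condition $\tilde{w}^k(u) = r$ is equivalent to saying $u \in E_r^k$ but $u \notin E_{r-1}^k$ (with the convention $E_{-1}^k = \emptyset$). Thus $D_r^k = E_r^k \setminus E_{r-1}^k$, and since $E_{r-1}^k \subseteq E_r^k$ we get $|D_r^k| = |E_r^k| - |E_{r-1}^k|$.

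Next I would count $|E_s^k|$. A coordinate $u_i$ satisfies $w(u_i) \leq s$ precisely when $u_i \in \bigcup_{j=0}^{s} D_j$, and since the $D_j$ partition $\mathbb{F}_q$ by definition, there are $\sum_{j=0}^{s} |D_j|$ such choices, independently for each coordinate. By the product rule, $|E_s^k| = (\sum_{j=0}^{s} |D_j|)^k$. Substituting $s = r$ and $s = r-1$ yields the main formula
\begin{equation*}
|D_r^k| = \Bigl(\sum_{i=0}^{r} |D_i|\Bigr)^k - \Bigl(\sum_{i=0}^{r-1} |D_i|\Bigr)^k.
\end{equation*}

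Finally, for the particular case $r = M_w$, I would note that $\{D_i\}_{i=0}^{M_w}$ partitions $\mathbb{F}_q$, so $\sum_{i=0}^{M_w} |D_i| = q$ and $\sum_{i=0}^{M_w - 1} |D_i| = q - |D_{M_w}|$, giving $|D_{M_w}^k| = q^k - (q - |D_{M_w}|)^k$. There is no genuine obstacle here; the only subtlety is being careful about the empty sum/convention when $r = 0$, which is handled by treating $\sum_{i=0}^{-1} |D_i| = 0$, consistent with $D_0^k = \{0\}$ and $|D_0| = 1$ (since $w(\alpha) = 0$ iff $\alpha = 0$).
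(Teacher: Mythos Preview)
Your proof is correct. It differs from the paper's argument in a small but genuine way: the paper counts $|D_r^k|$ directly by conditioning on the number $l$ of coordinates that attain the maximum weight $r$, obtaining $|D_r^k| = \sum_{l=1}^{k} \binom{k}{l} |D_r|^l \bigl(\sum_{i=0}^{r-1}|D_i|\bigr)^{k-l}$, and then collapses this sum via the binomial theorem. Your approach instead writes $D_r^k$ as the set difference $E_r^k \setminus E_{r-1}^k$ of two ``cumulative'' sets, each of which is a product and hence counted immediately by the product rule. Your route is slightly more streamlined (no binomial identity needed), while the paper's route makes explicit the contribution from each possible number of maximum-weight coordinates; both arrive at the same closed form.
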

\begin{proof}
	Let $u=(u_{1},u_{2},\ldots,u_{{k}}) \in D_{r}^{k} $ so that $ \tilde{w}^{k}(u)=r$. This means, at least one component of $u$  has  its maximum weight as  $r$. The number of  such $u   \in \mathbb{F}_{q}^{k} $ with $l$ components having  $r$ as their $w$-weight  is $	{k \choose l}|D_{r}|^{l}(1+|D_{1}|+ |D_{2}|+ \ldots + |D_{r-1} |)^{k-l} $. Thus,
	\begin{align*}
		|D_r^{k}|&= \sum\limits_{l=1}^{k} {k \choose l} |D_{r}|^{l} (1+|D_{1}|+ |D_{2}|+ \ldots + |D_{r-1} |)^{k-l}  \\
		&=( 1+ |D_{1}|+ |D_{2}|+ \ldots +  |D_{r}| )^{k}-  ( 1+|D_{1}|+ |D_{2}|+ \ldots + |D_{r-1} |)^{k}  \\
		&= ( \sum\limits_{i=0}^{r} |D_{i}|)^{k}-( \sum\limits_{i=0}^{r-1} |D_{i}|)^{k}
	\end{align*}
	where $D_r = \{\alpha \in \mathbb{F}_q : w(\alpha) = r\}$.
\end{proof}
The above Proposition \ref{D_r^j} gives the weight distribution of $\mathbb{F}_{q}^{k}$ by considering $ \tilde{w}^{k}$ as weight.
\par Next, we shall proceed on to determine the $(P,w,\pi)$-weight distribution of $\mathbb{F}_{q}^{N}$. 
For each $1 \leq r \leq nM_w$, let $A_r = \{x= x_{1} \oplus x_{2} \oplus \ldots \oplus x_{n} \in  \mathbb{F}_{q}^{N}  : w_{(P,w,\pi)}(x)=r  \}$ and $A_0 = \{ \bar{0}\}$. It is clear that
$x \in A_r$ iff $I_{x}^{P,\pi} = \langle supp_{\pi} (x) \rangle \in \mathcal{I}_j^{i}$ for some $j \in \{1,2,\ldots,n\}$ such that $i \geq j$.
It is also easy to observe the following: 
\begin{itemize}
	\item If $w_{(P,w,\pi)}(x) \leq M_w$  then $I_{x}^{P,\pi} \in \mathcal{I}_j^{j}$ for some $j  \in \{1,2,\ldots,n\}$.
	\item If  $w_{(P,w,\pi)}(x) \geq M_w $  then  $I_{x}^{P,\pi}  \in \mathcal{I}_j^{i}$ for some $j  \in \{1,2,\ldots,n\}$ such that $i \geq j$.
\end{itemize}
 Now, for an $x \in A_r$ with $x= x_1 \oplus x_2 \oplus \cdots \oplus x_n $ and  $x_i = (x_{i_1},x_{i_2},\ldots,x_{i_{k_i}}) \in \mathbb{F}_{q}^{k_i} $, we have 
 \begin{align*}
 w_{(P,w,\pi)}(x) &= \sum\limits_{i \in M_{x}^{P,\pi}}  \tilde{w}^{k_i}{(x_i)} +   \sum\limits_{i \in {I_x}^{P,\pi} \setminus M_{x}^{P,\pi}} M_w \\ &=  \tilde{w}^{k_{i_1}}(x_{i_1})+  \tilde{w}^{k_{i_2}}(x_{i_2})  + \ldots+  \tilde{w}^{k_{i_j}}(x_{i_j})+(i-j)M_w
 \end{align*}
  where $M_x^{P,\pi}= \{i_1, i_2, \ldots, i_j\}$. 
Then, $ \tilde{w}^{k_{i_1}}(x_{i_1}) + \tilde{w}^{k_{i_2}}(x_{i_2})+\ldots + \tilde{w}^{k_{i_j}}(x_{i_j}) =r-(i-j)M_w$. As the  weight is varying on the block positions with respect to maximal elements but fixed on block positions with respect to non-maximal elements, we need to choose $j$ number of blocks $x_{i_s} \in  \mathbb{F}_{q}^{k_{i_s}}  $ such that  $\sum\limits_{s=1}^{j} \tilde{w}^{k_{i_s}} (x_{i_s}) =r-(i-j)M_w $.  Thus, to get the  number of  vectors in $A_r$, we first  define the  partition of the number  $r-(i-j)M_w $ in order to have numbers like  $\tilde{w}^{k_{i_1}} (x_{i_1})$, $\tilde{w}^{k_{i_2}} (x_{i_2}), \ldots$, $\tilde{w}^{k_{i_s}} (x_{i_s})$  as its $j$ parts.  
\begin{definition}[{Partition of $r$}]
	For any positive integer $r$, by a partition of $r$ we mean a finite non-increasing sequence of positive integers $b_1,b_2,\ldots,b_j $ such that $b_1+b_2+\dots+ b_j =r $ and is denoted by $(b_1,b_2,\ldots,b_j)$. $b_1,b_2,\ldots,b_j $ are called as parts of the partition  and they need not be distinct.
	As the number of parts in a partition is restricted to at most $n$,  such a partition is termed as an $n$-part partition. Let $PRT[r]$ denote the set of all  $n$-part partitions of $r$ such that each part in $r$ does not exceed $M_w$ i.e. $PRT[r]=$
	\begin{align*}
		\{(b_1,b_2,\ldots,b_j) : b_1+b_2+\dots+ b_j =  r; ~ 1 \leq b_i \leq M_{w} \ \text{and} \ 1 \leq j \leq n\}.
	\end{align*} 
\end{definition}
Since we need to partition $r-(i-j)M_w $, and $ \tilde{w}^{k_{i_1}}(x_{i_1}) + \tilde{w}^{k_{i_2}}(x_{i_2})+\ldots + \tilde{w}^{k_{i_j}}(x_{i_j}) =r-(i-j)M_w$,  the number of parts  is restricted to at most $n-(i-j)$. Hence, to determine the parts of $r-(i-j)M_w$, we define: 
\begin{align*}
PRT_{i-j}[r]=	\{(b_1,b_2,\ldots,b_t) : b_1+b_2+\dots+ b_t =  r- (i-j)M_w;  1 \leq b_s \leq M_{w} \\  \text{ and }   1 \leq t \leq n-i+j\}
\end{align*}
where $i-j$ is the number of non-maximal elements in a given ideal $I \in \mathcal{I}_{j}^{i}$. Note that $PRT[r]=PRT_0[r]$.
\begin{example}\label{example1}
	Let $r=11$, $M_w =3$ and $n=5$, then the $5$-part partitions of  $11$ are given as
	$	PRT_0[11] = \{(3,3,3,2),(3,3,3,1,1), (3,3,2,2,1),(3,2,2,2,2)\}$. 
	Moreover, when $i=5$, $j=3$, by considering an ideal $I \in \mathcal{I}_3^5$, we have $r-(i-j)M_w=5$,  $n-(i-j)=3$, and hence $	PRT_2[11] = \{(3,2),(3,1,1), (2,2,1)\}$.	
\end{example}
	Let  $b= (b_1,b_2,\ldots,b_j) \in PRT_{i-j}[r]$ and  $I \in \mathcal{I}_{j}^{i}$. As the  weight is varying on the $j$ maximal block positions but fixed on the $i-j$ non-maximal block positions, we need to choose $j$ number of $x_{i_s} \in  \mathbb{F}_{q}^{k_{i_s}}  $ such that  $\sum\limits_{s=1}^{j} \tilde{w}^{k_{i_s}} (x_{i_s}) =r-(i-j)M_w $. 
Parts $b_{i}$'s of $b$ do help in finding suitable  $x_{i} \in  \mathbb{F}_{q}^{k_{i}}  $  that  have weight   $ \tilde{w}^{k_i} (x_{i}) = b_{i}$ so that all these $x_{i} $'s will occur only in the positions of maximal elements of  $I$. Since weights in the positions of non-maximal elements are fixed as $M_w$,  any  $x_{i} \in  \mathbb{F}_{q}^{k_{i}}  $ can occcur in those non-maximal positions. But in the remaining $n-|I|$ positions $l$,  $\bar{0} \in  \mathbb{F}_q^{k_l}$ will  occur. Thus, for a $b= (b_1,b_2,\ldots,b_j) \in PRT_{i-j}[r]$ and  $I \in \mathcal{I}_{j}^{i}$, the set $A =  $
	\begin{align*}
		\bigg\{ 
			{x= x_{1}  \oplus \cdots \oplus  x_{n} \in  \mathbb{F}_{q}^{N} :  x_s =} \left\{ 
		\begin{array}{ll}
				\bar{0} \in \mathbb{F}_q^{k_s},  & \text{for} ~s \in [n] \setminus I \\
				x_s \in	D_{b_{s}}^{k_s}, & \text{for}  \ s \in Max(I) \\
				x_s	\in \mathbb{F}_q^{k_s}, & \text{for}  \ s \in I  \setminus Max(I)
			\end{array}	\right\}, {1 \leq s \leq n}  \bigg\}
\end{align*}
gives the set of those $N$-tuples $x= x_{1} \oplus x_{2} \oplus \ldots \oplus x_{n} \in \mathbb{F}_{q}^{N}$ each of whose $(P,w,\pi)$-weight is $r$, $ \langle supp_{\pi} (x) \rangle = I$ and $ \tilde{w}^{k_s} (x_{s}) = b_{s}$ for $s \in Max(I)$. Such a set $A$ of vectors of $(P,w,\pi)$-weight $r$ can be obtained for each arrangement of a $b \in PRT_{i-j} [r]$ for a given $I \in \mathcal{I}_j^{i} $. 
\begin{example} \label{ex1}
	Let $\mathbb{Z}_{7}^{13}=\mathbb{Z}_{7}^2 \oplus \mathbb{Z}_{7}^3 \oplus \mathbb{Z}_{7}^4 \oplus \mathbb{Z}_{7}^2 \oplus \mathbb{Z}_{7}^2$. Consider $w$ as the Lee weight on $\mathbb{Z}_{7}$. Let $\preceq $ be a partial order relation  on the set $[5]=\{1,2,3,4,5\}$ such that $1 \preceq 2$. Here $I = \{1,2,4\}$  is the ideal of cardinality $3$ with $2$ maximal elements. $Max(I)= \{2,4\}$, $I \setminus Max(I)= \{1\}$ and $[n] - I =\{3,5\}$. As $M_w = 3$, 
	$	PRT_1[8] = \{(3,2),(3,1,1), (2,2,1),(2,1,1,1)\}$. Now, for $b=(3,2) \in  PRT_1[8] $ and  $I = \{1,2,4\}$, at the positions of maximal elements  $Max(I)= \{2,4\}$, $x_2 \in \mathbb{Z}_{7}^3$ such that  $ \tilde{w}^{k_2} (x_{2}) = 3$, and $x_4 \in \mathbb{Z}_{7}^2$ such that $ \tilde{w}^{k_4} (x_{4}) = 2$ will occur. At non-maximal positions of $I$ (here it is $ \{1\}$),  any $x_1 \in \mathbb{Z}_{7}^2$ will occur. In the remaining positions $[n] - I =\{3,5\}$, $x_3 =\bar{0} \in \mathbb{Z}_{7}^4$ and  $x_3 =\bar{0} \in \mathbb{Z}_{7}^2$ will occur. Thus, when  $I = \{1,2,4\}$ and  $b=(3,2) \in  PRT_1[8] $, 
	\begin{equation*}
		\begin{split}
			A=	\{x=x_1 \oplus x_2 \oplus \bar{0} \oplus x_4 \oplus \bar{0}: x_1 \in \mathbb{Z}_{7}^2, ~ x_2 \in \mathbb{Z}_{7}^3 & \text{ with} \  \tilde{w}^{k_2} (x_{2}) = 3, \\ & x_4 \in \mathbb{Z}_{7}^2 ~\text{with}  ~   \tilde{w}^{k_4} (x_{4}) = 2 \}
		\end{split}
	\end{equation*}
	gives the set of vectors that have  $(P,w,\pi)$-weight  $3+3+2=8$. 
	And,  if we take $(2,3)$ which is an arrangement of parts of $b=(3,2)$ then 
	\begin{equation*}
		\begin{split}
			B=	\{x=x_1 \oplus x_2 \oplus \bar{0} \oplus x_4 \oplus \bar{0} : x_1 \in \mathbb{Z}_{7}^2, ~ x_2 \in \mathbb{Z}_{7}^3 & \text{ with}~  \tilde{w}^{k_2} (x_{2}) = 2, \\   &  x_4 \in \mathbb{Z}_{7}^2 ~\text{with}~   \tilde{w}^{k_4} (x_{4}) = 3 \}
		\end{split}
	\end{equation*}
	also gives the set of vectors that have $(P,w,\pi)$-weight $3+2+3=8$ when  $I = \{1,2,4\}$.
\end{example}
\begin{definition}[{Arrangement of $b$}]
	Given an $n$-part partition $ b= (b_1,b_2,\ldots,b_j) \in PRT_{i-j}[r] $ with $j$ parts, an arrangement of $b$ is a $j$-tuple   $ (b_{t_1},b_{t_2},\ldots,b_{t_j}) $ such that $t_i \in \{1,2,\ldots,j\}$ are distinct. Thus, $ARG[(b_1,b_2,\ldots,b_j)] =$
	\begin{align*}
		\{ (b_{t_1},  b_{t_2},  \ldots , b_{t_j}): t_1, t_2, \ldots,t_j \in \{1,2,\ldots,j\},~ t_i \neq t_k~ \text{for} ~ i \neq k \}
	\end{align*}
	denote the set of all such arrangements of a given $b= (b_1,b_2,\ldots,b_j) \in PRT_{i-j}[r] $.  \par Partition of a weight $r$ (or  $r-(i-j)M_w$, as the case may be) and arrangement of their parts play a vital role in determining the weight distribution of a $(P,w,\pi)$-space.
\end{definition}
\begin{example}
	Continuing from the Example \ref{example1} with $r=11$, $M_w =3$ and $n=5$, we can see that the arrangements of  $b= (3,3,3,2) \in PRT_0[11] $ are given by
$		ARG[(3,3,3,2)] = \{ (3,3,3,2),(3,3,2,3), (3,2,3,3),(2,3,3,3)\} 		$ 		
	and the arrangements of  $b= (3,3,3,1,1) \in PRT_{0}[11] $ are given as
		$ARG[(3,3,3,1,1)]= \{ (3,3,3,1,1),(3,3,1,1,3),(3,1,1,3,3), (1,1,3,3,3),\\(1,3,1,3,3), (1,3,3,1,3),(1,3,3,3,1), (3,1,3,1,3),  (3,1,3,3,1), (3,3,1,3,1)\}$.
\end{example}
   \par It is to be the noted that, if $t_1, t_2 , \ldots, t_l$ denote the  $l$ distinct elements in the parts  $b_1,b_2,\ldots,  b_t$ with multiplicity   $r_1,r_2,\ldots,r_l$ respectively so that   $b_1+b_2+\dots+ b_t = \sum\limits_{s=1}^{l} r_s  t_s= r - (i-j) M_w  $, then  $| ARG[b]|= \frac{t!}{r_1 ! \ r_2 ! \ \cdots r_l !}$ for each $b=(b_1,b_2,\ldots,b_t)  \in PRT_{i-j}[r]$.
\par Now, for each $b=(b_1,b_2,\ldots,b_t) \in 	PRT_{i-j}[r]$ and for each $I  \in \mathcal{I}_j^{i}$, we  define a set  $T_b [I] $ which gives all  vectors $x =  x_{1} \oplus x_{2} \oplus \ldots \oplus x_{n} \in \mathbb{F}_q^{N}$ with $(P,w,\pi)$-weight $ r=b_1 + b_2 +  \ldots + b_j +(i-j)M_w  $ such that  $\langle supp_{\pi} (x) \rangle=I $. Thus,
\begin{equation*}
	\begin{split}
		&	T_b [I] =	 \bigcup\limits_{ (b_{m_{i_1}},  b_{m_{i_2}},  \ldots , b_{m_{i_j}})    \in ARG[b] }\\&  \bigg\{    x= x_{1}   \oplus \cdots  \oplus x_{n}    \in \mathbb{F}_{q}^{N} : 
		x_s  =   \left\{ 
		\begin{array}{ll}
			\bar{0}  \in \mathbb{F}_q^{k_s}, & \text{for} ~s \notin I \\
			x_s \in	D_{b_{m_s}}^{k_s}, &   \text{for}  \ s \in Max(I) \\
			x_s	\in \mathbb{F}_q^{k_s}, &  \text{for}  \ s \in I  \setminus Max(I)
		\end{array}	\right\},    1 \leq s \leq n \bigg\}
	\end{split}
\end{equation*}
\begin{proposition}
	For $1 \leq r \leq n M_{w}$, we have   \begin{center}
	$A_r = \bigcup\limits_{i =1}^{n}  \bigcup\limits_{j=1 }^{i}  \bigcup\limits_{I \in \mathcal{I}_j^{i} } \bigcup\limits_{b =(b_1,b_2,\ldots,b_t) \in 	PRT_{i-j}[r] } T_b [I]$.
	\end{center}
\end{proposition}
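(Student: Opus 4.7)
The plan is to establish this set equality by double inclusion, using the explicit description of $T_b[I]$ together with the fact that every vector $x\in A_r$ produces canonical choices for $I$, for the pair $(i,j)$, and for the partition $b$.

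For the inclusion $\bigcup T_b[I]\subseteq A_r$, I would start with an arbitrary $x\in T_b[I]$, where $I\in\mathcal{I}_{j}^{i}$ and $b=(b_1,\ldots,b_j)\in PRT_{i-j}[r]$, and verify two things. First, that $\langle supp_{\pi}(x)\rangle=I$: by construction, $x_s=\bar{0}$ for $s\notin I$, so $supp_{\pi}(x)\subseteq I$; and for $s\in Max(I)$ we have $x_s\in D_{b_{m_s}}^{k_s}$ with $b_{m_s}\ge 1$, so $x_s\neq 0$, giving $Max(I)\subseteq supp_{\pi}(x)$ and hence $\langle supp_{\pi}(x)\rangle\supseteq\langle Max(I)\rangle=I$. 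Second, that $w_{(P,w,\pi)}(x)=r$: since $M_{x}^{P,\pi}=Max(I)$ and $I_{x}^{P,\pi}\setminus M_{x}^{P,\pi}=I\setminus Max(I)$ has cardinality $i-j$, the definition of $(P,w,\pi)$-weight together with $\tilde{w}^{k_s}(x_s)=b_{m_s}$ for $s\in Max(I)$ yields
\begin{equation*}
w_{(P,w,\pi)}(x)=\sum_{s\in Max(I)}b_{m_s}+(i-j)M_w=(b_1+\cdots+b_j)+(i-j)M_w=r,
\end{equation*}
where in the last step I use that $(b_{m_{i_1}},\ldots,b_{m_{i_j}})\in ARG[b]$ is a permutation of the parts of $b$, whose sum is $r-(i-j)M_w$ by the definition of $PRT_{i-j}[r]$.

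For the reverse inclusion $A_r\subseteq\bigcup T_b[I]$, I would start with $x\in A_r$ and read off canonical data from $x$. Set $I:=\langle supp_{\pi}(x)\rangle$, $i:=|I|$, and $j:=|Max(I)|$; since $r\ge 1$ forces $x\neq 0$, we have $i\ge j\ge 1$, so $I\in\mathcal{I}_{j}^{i}$. The key point here is that every maximal element of an ideal generated by a set must itself lie in that generating set: if $s\in Max(I)$ with $s\notin supp_{\pi}(x)$, then $s\in\langle supp_{\pi}(x)\rangle$ would force $s\preceq s'$ for some $s'\in supp_{\pi}(x)\subseteq I$ with $s'\neq s$, contradicting maximality of $s$ in $I$. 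Consequently each $x_s$ for $s\in Max(I)$ is nonzero, so the numbers $c_s:=\tilde{w}^{k_s}(x_s)$, $s\in Max(I)$, satisfy $1\le c_s\le M_w$, and their sum equals $r-(i-j)M_w$ by the weight formula. Rearranging the $j$ numbers $\{c_s\}$ in non-increasing order produces a partition $b=(b_1,\ldots,b_j)\in PRT_{i-j}[r]$ for which the original ordering is an element of $ARG[b]$, and then $x\in T_b[I]$ by definition.

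No single step here poses a real obstacle; the work is entirely bookkeeping. The only subtle point worth spelling out carefully in the proof is the lemma-style observation used above, namely that $Max(\langle A\rangle)\subseteq A$ for any $A\subseteq P$, since this is what guarantees that the $j$ maximal block-weights are genuinely positive and thus assemble into a legitimate partition in $PRT_{i-j}[r]$ rather than merely a composition with possibly zero parts.
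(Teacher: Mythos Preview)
Your proposal is correct. The paper actually states this proposition without proof, treating it as an immediate consequence of the discussion that precedes it (the construction of $T_b[I]$ and the explanation that each such set consists precisely of those $x$ with $\langle supp_\pi(x)\rangle=I$ and block-weights at the maximal positions summing to $r-(i-j)M_w$). Your double-inclusion argument supplies exactly the details the paper leaves implicit, and the observation $Max(\langle A\rangle)\subseteq A$ that you single out is indeed the one nontrivial point needed to ensure the maximal block-weights are strictly positive and hence form a genuine element of $PRT_{i-j}[r]$.
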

When $r \leq M_w$,  for any $x \in A_r$, all elements of $I_x^{P,\pi}$ are  maximal (that is, $I_x^{P,\pi}= M_x^{P,\pi}$), so that   $w_{(P,w,\pi)}(x)= \sum\limits_{i \in M_{x}^{P,\pi}}  \tilde{w}^{k_i}{(x_i)}  =  \tilde{w}^{k_{i_1}}(x_{i_1})+  \tilde{w}^{k_{i_2}}(x_{i_2})  + \cdots+  \tilde{w}^{k_{i_j}}(x_{i_j})$ where $ M_x^{P,\pi} = \{i_1,i_2,\ldots,i_j\} $. 
\begin{proposition}\label{rlessthanMw}
	For any $1 \leq r \leq M_{w} $,  the number of $N$-tuples  $x \in \mathbb{F}_{q}^N$ having $w_{(P,w,\pi)}(x)= r$  is $|A_{r}| 
	= $
	\begin{align*}
		\sum\limits_{j=1}^{n} \sum\limits_{I \in  \mathcal{I}_j^{j}} \sum\limits_{(b_1,b_2,\ldots,b_j) \in PRT_0[r]} \sum\limits_{(b_{m_1},  b_{m_2},  \ldots , b_{m_j}) \in ARG[(b_1,b_2,\ldots,b_j)] }  |D_{b_{m_{i_1}}}^{k_{i_1}}| |D_{b_{m_{i_2}}}^{k_{i_2}}| \cdots |D_{b_{m_{i_j}}}^{k_{i_j}}|
	\end{align*}
	wherein, for an $I \in \mathcal{I}_j^{j}$, $Max(I) = \{i_1,i_2,\ldots,i_j\}$ and $(b_{1},b_{2}, \dots, b_{j}) \in PRT_{0}[r]$.
\end{proposition}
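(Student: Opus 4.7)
The plan is to treat this as the restricted case $r \leq M_w$ of the general decomposition given in the preceding (unnumbered) proposition, then simply count. First, I would establish the key structural observation: if $w_{(P,w,\pi)}(x) = r \leq M_w$, then every element of $I_x^{P,\pi}$ must already be maximal. Indeed, each nonzero block contributes at least $1$ to the weight (since $w(\alpha) \geq 1$ for $\alpha \neq 0$ by the weight axioms), so a single non-maximal element would force the weight to be at least $M_w$ from that element alone, plus at least one further positive contribution from any maximal element lying above it in $I_x^{P,\pi}$, giving strictly more than $M_w$. Thus $I_x^{P,\pi} = M_x^{P,\pi}$, which forces $i = j$ in the indexing of the general decomposition, so only the collection $\mathcal{I}_j^{j}$ contributes to $A_r$ and the ``non-maximal'' block disappears from the sum.

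Next, I would invoke the previous proposition to write
\begin{equation*}
A_r = \bigcup_{j=1}^{n} \bigcup_{I \in \mathcal{I}_j^{j}} \bigcup_{b \in PRT_0[r]} T_b[I],
\end{equation*}
and verify that this union is disjoint. Different $I$'s give disjoint sets because $\langle supp_\pi(x)\rangle$ is determined by $x$. Within a fixed $I$, different multiset partitions $b$ are disjoint because the multiset $\{\tilde{w}^{k_{i_s}}(x_{i_s}) : s \in Max(I)\}$ is determined by $x$. Finally, within a fixed $b$, two distinct arrangements $(b_{m_1},\dots,b_{m_j}) \neq (b_{n_1},\dots,b_{n_j})$ differ at some coordinate $s$, forcing $x_{i_s}$ to lie in the disjoint sets $D_{b_{m_s}}^{k_{i_s}}$ and $D_{b_{n_s}}^{k_{i_s}}$.

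Having established disjointness, I compute $|T_b[I]|$ directly from its definition. For $I \in \mathcal{I}_j^{j}$ with $Max(I)=\{i_1,\dots,i_j\}$ (so $I\setminus Max(I) = \emptyset$), coordinates outside $I$ are forced to be $\bar 0$, and for a fixed arrangement $(b_{m_1},\dots,b_{m_j})$ the block at position $i_s$ ranges freely over $D_{b_{m_s}}^{k_{i_s}}$. Hence
\begin{equation*}
|T_b[I]| = \sum_{(b_{m_1},\dots,b_{m_j}) \in ARG[b]} \prod_{s=1}^{j} |D_{b_{m_s}}^{k_{i_s}}|,
\end{equation*}
and summing over $b \in PRT_0[r]$, $I \in \mathcal{I}_j^{j}$, and $j \in \{1,\dots,n\}$ produces the stated formula. (The individual block cardinalities $|D_{b_{m_s}}^{k_{i_s}}|$ are furnished by Proposition \ref{D_r^j}.)

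The main obstacle, such as it is, is really just the bookkeeping: being careful that the indexing set $\mathcal{I}_j^{j}$ is nonempty only when $P$ has at least $j$ minimal elements (an ideal all of whose points are maximal inside the ideal is exactly a set of minimal elements of $P$), and making sure the arrangements are not overcounted when $b$ has repeated parts — the formula $|ARG[b]| = t!/(r_1!\cdots r_l!)$ recorded just before the proposition handles this automatically because $ARG[b]$ is defined as the set of distinct tuples rather than distinct index sequences.
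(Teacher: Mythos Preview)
Your proposal is correct and follows essentially the same route as the paper: the paper also observes (just before the proposition, as a bullet point) that $r\le M_w$ forces $I_x^{P,\pi}\in\mathcal{I}_j^j$, then writes down $T_b[I]$ for $I\in\mathcal{I}_j^j$, computes $|T_b[I]|$ exactly as you do, and sums. You have simply been more explicit than the paper about \emph{why} no non-maximal elements can occur and about the disjointness of the pieces in the decomposition---details the paper leaves implicit.
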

\begin{proof} 
	Let $x \in {A}_r $ and $I = \langle supp_{\pi} (x) \rangle $. Now $I  \in \mathcal{I}_j^{j}$ for some $j \in \{1,2,\ldots,n\}$ as $r \leq M_w$. Let $ I = \{i_1,i_2,\ldots,i_j\}$, $b=(b_1,b_2,\ldots,b_j)  \in PRT_{0}[r]$.  Thus, 
	\begin{align*}
 	&	T_b [I] = \bigcup\limits_{ (b_{m_{i_1}},  b_{m_{i_2}},  \ldots , b_{m_{i_j}})    \in ARG[b] } \\& 		
				 \bigg\{  x= x_{1} \oplus x_{2} \oplus \cdots \oplus x_{n} \in \mathbb{F}_{q}^{N} :      x_s  = \left\{ 
			\begin{array}{ll}
				0  \in \mathbb{F}_q^{k_s}, & \text{for} ~s \notin I \\
				x_s \in 	D_{b_{m_s}}^{k_s}, &   \text{for}  \ s \in I
			\end{array}	\right\}, 1 \leq s \leq n \bigg\}
	\end{align*}
	gives all the vectors $x \in \mathbb{F}_q^{N}$ having $w_{(P,w,\pi)}(x)= r$ such that  $ \langle supp_{\pi} (x) \rangle =I $ and $ \sum\limits_{s \in Max(I)}  \tilde{w}^{k_s}(x_s) = b_1 + b_2 +  \cdots + b_j$. So,   
	\begin{equation*}
		|T_b [I] | = \sum\limits_{(b_{m_{i_1}},  b_{m_{i_2}},  \ldots , b_{m_{i_j}}) \in ARG[(b_{1},b_{2}, \dots, b_{j})] } |D_{b_{m_{i_1}}}^{k_{i_1}}|  |D_{b_{m_{i_2}}}^{k_{i_2}}| \cdots |D_{b_{m_{i_j}}}^{k_{i_j}}| 
	\end{equation*}
	Hence, the number of $x \in \mathbb{F}_q^{N}$ having ${(P,w,\pi)}$-weight as $r$ is
	\begin{equation*}	
		|A_{r}| 
		= \sum\limits_{j=1}^{n} \sum\limits_{I \in  \mathcal{I}_j^{j}} \sum\limits_{(b_1,b_2,\ldots,b_j) \in PRT_0[r]} |T_b [I] |
	\end{equation*} 
\end{proof} 
For the case when  $r \geq M_w$, we will have $ r=tM_w+k$ for some $k \leq M_w$ and  for any $x \in A_r$, $I_x^{P, \pi} $ will consist of at most $t$ non-maximal elements.
\begin{proposition}\label{r=Mw}
	For any $ r =t M_{w} + k$ with $1 \leq t \leq n-1$ and $0 < k \leq M_w$, the number of $N$-tuples  $ x \in \mathbb{F}_{q}^N$ having $w_{(P,w,\pi)}(x)= r$  is 
	\begin{align*}
		|A_{tM_w + k}| =\sum\limits_{i=0}^{t} \sum\limits_{j=1}^{n}  \sum\limits_{I \in  \mathcal{I}_j^{j+i}}
		\sum\limits_{(b_1,b_2,\ldots,b_j) \in PRT_{i}[r]} &\sum\limits_{(b_{m_{i_1}},b_{m_{i_2}},  \ldots , b_{m_{i_j}}) \in ARG[(b_1,b_2,\ldots,b_j)] } \\&  |D_{b_{m_{i_1}}}^{k_{i_1}}|   |D_{b_{m_{i_2}}}^{k_{i_2}}|  \cdots |D_{b_{m_{i_j}}}^{k_{i_j}}|   q^{k_{l_1} + k_{l_2} + \cdots + k_{l_i}} 
	\end{align*} 
	wherein, for an $I \in {\mathcal{I}_j^{j+i} }$,  $Max(I)=\{i_1,i_2,\ldots,i_j\}$ and  $I \setminus Max(I) = \{l_1, l_2, \dots, l_{i} \}$.
\end{proposition}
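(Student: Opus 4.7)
The plan is to extend the argument of Proposition~\ref{rlessthanMw} by permitting $I_x^{P,\pi}$ to contain non-maximal elements. First I would stratify $A_{tM_w+k}$ by the ideal $I=I_x^{P,\pi}$ and by the number $i$ of its non-maximal elements. Since every non-maximal coordinate contributes exactly $M_w$ to the $(P,w,\pi)$-weight and $0<k\le M_w$, the inequality $iM_w\le tM_w+k$ forces $0\le i\le t$. Hence $I\in\mathcal{I}_j^{j+i}$ for some $j\ge 1$ and $0\le i\le t$, which explains the outer sums.

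Next, fix $i$, $j$, and such an ideal $I$ with $Max(I)=\{i_1,\ldots,i_j\}$ and $I\setminus Max(I)=\{l_1,\ldots,l_i\}$. For $x\in A_r$ with $I_x^{P,\pi}=I$ the weight splits as
\[
w_{(P,w,\pi)}(x)=\sum_{s=1}^{j}\tilde w^{k_{i_s}}(x_{i_s})+iM_w = r,
\]
so $\sum_s \tilde w^{k_{i_s}}(x_{i_s})=r-iM_w$, which is encoded by a $j$-part partition in $PRT_i[r]$ together with a choice of arrangement in $ARG[b]$ for $b=(b_1,\ldots,b_j)$. Conversely, for each $b\in PRT_i[r]$ and each arrangement $(b_{m_{i_1}},\ldots,b_{m_{i_j}})\in ARG[b]$, Proposition~\ref{D_r^j} furnishes exactly $|D_{b_{m_{i_s}}}^{k_{i_s}}|$ valid choices for the maximal block $x_{i_s}$.

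It remains to count the blocks at the remaining positions. At each non-maximal position $l_s\in I\setminus Max(I)$, any vector $x_{l_s}\in\mathbb{F}_q^{k_{l_s}}$ is admissible, since $l_s$ lies strictly below some $i_t\in Max(I)$ and therefore belongs to $\langle supp_\pi(x)\rangle$ regardless of whether $x_{l_s}$ vanishes; this produces the factor $q^{k_{l_1}+\cdots+k_{l_i}}$. At positions $s\in[n]\setminus I$, $x_s=\bar0\in\mathbb{F}_q^{k_s}$ is forced. Multiplying these independent counts and summing over $i,j,I,b$, and the arrangements of $b$ gives the stated formula.

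The main subtlety I expect to need to justify carefully is precisely the previous point: that a non-maximal position in $I$ may carry the zero block without changing the generated ideal, so the natural factor there is the full $q^{k_{l_s}}$ and not $q^{k_{l_s}}-1$. Disjointness of the union is automatic, since $I_x^{P,\pi}$ (and hence $i$, $j$, $I$) is recovered from $x$, and the arrangement of parts is recovered from the tuple of block weights at the maximal positions once $b$ is given.
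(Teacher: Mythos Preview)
Your approach is essentially the same as the paper's: stratify $A_r$ by the ideal $I=I_x^{P,\pi}$, record the number $i$ of non-maximal elements and the number $j$ of maximal elements so that $I\in\mathcal{I}_j^{j+i}$, encode the maximal-block weights by a partition $b\in PRT_i[r]$ together with an arrangement, and count the free choices at non-maximal positions by $q^{k_{l_1}+\cdots+k_{l_i}}$. The paper packages the fixed-$I$, fixed-$b$ count into the set $T_b[I]$ defined just before the proposition and then sums $|T_b[I]|$, but the content is identical to what you wrote; your explicit remarks on why non-maximal blocks may be zero and why the pieces are disjoint are in fact more careful than the paper's terse version.

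One small point to tighten: the inequality $iM_w\le tM_w+k$ alone does not exclude $i=t+1$ in the boundary case $k=M_w$. You need the extra observation that the maximal contribution is at least $j\ge 1$, so in fact $iM_w\le r-1<(t+1)M_w$, whence $i\le t$. With that adjustment your argument is complete.
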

\begin{proof}
	Let $x \in {A}_r $ and $I = \langle supp_{\pi} (x) \rangle $. As  $w_{(P,w,\pi)}(x) =r \geq M_w  $, $I  \in \mathcal{I}_j^{j+i}$ for some $j  \in \{1,2,\ldots,n\}$ and $ i \geq  0$.  
	Let $ Max(I) = \{i_1,i_2,\ldots,i_j\} $, $ I  \setminus Max(I) = \{l_1, l_2, \dots, l_{i} \}$ and $b=(b_1,b_2,\ldots,b_j)  \in PRT_{i}[r]$.  Thus, 
		\begin{align*}
	&	T_b [I] = 	\bigcup\limits_{ (b_{m_{i_1}},   \ldots , b_{m_{i_j}})    \in ARG[b] } 
			\\ & 	 \bigg\{ 
			x= x_{1} \oplus  \cdots \oplus x_{n} \in \mathbb{F}_{q}^{N} :  x_s  = \left\{ 
			\begin{array}{ll}
				\bar{0}  \in \mathbb{F}_q^{k_s}, & \text{for} ~s \notin I \\
				x_s \in	D_{b_{m_s}}^{k_s}, &  \text{for}  \ s \in Max(I) \\
				x_s	\in \mathbb{F}_q^{k_s}, &  \text{for}  \ s \in I  \setminus Max(I)
			\end{array}	\right\},  1 \leq s \leq n  \bigg\}  
	\end{align*}
 gives all the vectors $x \in \mathbb{F}_q^{N}$ having $w_{(P,w,\pi)}(x)= r$ such that  $\langle supp_{\pi} (x) \rangle  = I$ and $ \sum\limits_{i \in Max(I)}  \tilde{w}^{k_i}(x_i) = b_1 + b_2 +  \cdots + b_j$. So,   
	\begin{equation*}
		|T_b [I]| = \sum\limits_{(b_{m_{i_1}},  b_{m_{i_2}},  \ldots , b_{m_{i_j}})) \in ARG[b] } |D_{b_{m_{i_1}}}^{k_{i_1}}|  |D_{b_{m_{i_2}}}^{k_{i_2}}| \cdots |D_{b_{m_{i_j}}}^{k_{i_j}}|  q^{k_{l_1} + k_{l_2} + \cdots + k_{l_i}} 
	\end{equation*}
	As  $0 \leq |I \setminus Max(I)| = i \leq t $, we have 
	$	|A_{r}| 
	=  \sum\limits_{i=0}^{t} \sum\limits_{j=1}^{n} \sum\limits_{I \in  \mathcal{I}_j^{j+i}} \sum\limits_{(b_1,b_2,\ldots,b_j) \in PRT_i[r]} |T_b [I]|$.	
\end{proof}
Based on the results arrived at, $|A_r|$ can be stated  for certain particular values of $r$: \\
When $r < M_w$, we have
  \begin{equation*}
	|A_1| = \sum\limits_{I \in  \mathcal{I}_1^{1}}  |D_{1}^{k_{i_1}} |, ~
	|A_2| = \sum\limits_{I \in  \mathcal{I}_1^{1}} |D_{2}^{k_{i_1}} | + \sum\limits_{I \in  \mathcal{I}_2^{2}}  |D_{1}^{k_{i_1}} | |D_{1}^{k_{i_2}} | ~\text{and so on}.
  \end{equation*}	
   When $r = M_w$, we have 
  \begin{equation*}
	|A_{M_w}| =\sum\limits_{j=1}^{n} \sum\limits_{I \in  \mathcal{I}_j^{j}} 
	\sum\limits_{(b_1,\ldots,b_j) \in PRT_0{[M_w]}}\sum\limits_{(b_{m_{i_1}},   \ldots , b_{m_{i_j}}) \in ARG[(b_1,\ldots,b_j)] } |D_{b_{m_{i_1}}}^{k_{i_1}}|  \cdots |D_{b_{m_{i_j}}}^{k_{i_j}}|  
  \end{equation*}
 When  $r=M_{w}+k$ for  $1\leq k \leq M_{w} -1$, we have 
  \begin{multline*}
	|A_{M_{w} + k}|=\sum\limits_{j=1}^{n} \sum\limits_{I \in  \mathcal{I}_j^{j}} \sum\limits_{(b_1,\ldots,b_j) \in PRT_0[r]}\sum\limits_{(b_{m_{j_1}},    \ldots , b_{m_{j_j}}) \in ARG[(b_1,\ldots,b_j)] } |D_{b_{m_{i_1}}}^{k_{i_1}}|  \cdots |D_{b_{m_{i_j}}}^{k_{i_j}}|   \\ + 
	\sum\limits_{j=1}^{n} \sum\limits_{I \in  \mathcal{I}_j^{j+1}} \sum\limits_{(b_1,\ldots,b_j) \in PRT_{i}[r]}\sum\limits_{(b_{m_{i_1}},  \ldots , b_{m_{i_j}}) \in ARG[(b_1,\ldots,b_j)] } |D_{b_{m_{i_1}}}^{k_{i_1}}|  \cdots |D_{b_{m_{i_j}}}^{k_{i_j}}|    q^{k_{l_1}}		
\end{multline*} 
Now we shall illustrate the obtained results by choosing the field as $\mathbb{Z}_{7}$ by considering the weight $w$ on it as the Lee weight.  Note that, if we let $L_j = \{ i \in [n] : | \langle i \rangle |= j\} $,	 $1 \leq j \leq n$, then  $|\mathcal{I}_j^j| = {|L _1| \choose j}$. If $m_0$ denote the number of minimal elements in the given poset $P$, then  $|L_1|=m_0$.
\begin{example} \label{ex1}
	Let $\preceq $ be a partial order relation  on  $[5]=\{1,2,3,4,5\}$ such that $1 \preceq 2$. Consider $w$ as the Lee weight on $\mathbb{Z}_{7}$ and let $\mathbb{Z}_{7}^{13}=\mathbb{Z}_{7}^2 \oplus \mathbb{Z}_{7}^3 \oplus \mathbb{Z}_{7}^4 \oplus \mathbb{Z}_{7}^2 \oplus \mathbb{Z}_{7}^2$.  Here $I_1 = \{1\}$, $I_2 = \{3\}$, $I_3 = \{4\}$, $I_4 = \{5\}$  are the only ideals with cardinality one with respect to 
	$ \preceq$ and hence $\mathcal{I}_1^1 =\{I_1, I_2, I_3,I_4\}$. $I_5 = \{1,2\}$ is the ideal with cardinality two with one maximal element and $\mathcal{I}_1^{2} =\{I_5\}$.  $I_6 = \{1,3\}$, $I_7 = \{1,4\}$, $I_8 = \{1,5\}$, $I_9 = \{3,4\}$, $I_{10} = \{3,5\}$, $I_{11} = \{4,5\}$ are the ideals with cardinality $2$ with $2$ maximal elements and $\mathcal{I}_2 ^2 =\{I_6, I_7, I_8,I_{9},I_{10},I_{11}\}$.  $I_{12} = \{1,2,3\}$, $I_{13} = \{1,2,4\}$, $I_{14} = \{1,2,5\}$ are the ideals with cardinality $3$ with $2$ maximal elements and $\mathcal{I}_2^{3} =\{I_{12}, I_{13}, I_{14}\}$. $I_{15}= \{1,3,4\}$,  $I_{16}= \{1,3,5\}$, $I_{17}= \{1,4,5\}$, $I_{18}= \{3,4,5\}$ are the ideals with cardinality $3$ with $3$ maximal elements and $\mathcal{I}_3^3 = \{I_{15},I_{16}, I_{17}, I_{18} \}$. $I_{19} = \{1,2,3,4\}$, $I_{20} = \{1,2,3,5\}$, $I_{21} = \{1,2,4,5\}$ are the ideals with cardinality $4$ with $3$ maximal elements and  $\mathcal{I}_3 ^4 =\{I_{19}, I_{20}, I_{21} \}$.  $I_{22} = \{1,3,4,5\}$ is the only ideal with cardinality $4$ with $4$ maximal elements and thus $\mathcal{I}_4 ^4 =\{I_{22} \}$.  $I_{23} = \{1,2,3,4,5\}$ is the only ideal with cardinality $5$ with $4$ maximal elements and hence $\mathcal{I}_4 ^5 =\{I_{23} \}$. And, $\mathcal{I}_1^3 = \mathcal{I}_1^4 = \mathcal{I}_1^5 = \mathcal{I}_2^4 = \mathcal{I}_2^5 =  \mathcal{I}_3^5 =  \mathcal{I}_5^5 = \{ \}$. 
	\begin{enumerate}[label={(\roman*)}]
		\item   \sloppy{In $\mathbb{Z}_7$, $M_w=3$ and $|D_{1}| = |D_{2}| =|D_{3}| =2 $. 
		From Proposition \ref{D_r^j}, in $\mathbb{Z}_7^{13}$,} \\ 
		$|D_{1}^{k}| = (1+|D_{1}| )^{k} -1$, \\
		$|D_{2}^{k} | = (1+|D_{1}|+|D_{2}| )^{k} -(1+|D_{1}|)^k$, \\
		$|D_{3}^{k} | = (1+|D_{1}|+|D_{2}| +|D_{3}|)^{k} -(1+|D_{1}|+|D_{2}|)^k$, 
		where $k =2,3$, $4$.
		\item 	    \sloppy{$PRT[3]=\{(1 ,1 ,1),  ( 1,2), (3)\}$ and   $ARG[( 1,2)] = \{ ( 1,2), ( 2,1) \}$. 
		Thus,	number of vectors $x \in  \mathbb{Z}_{7}^{13}$ having weight $3$ is }
		\begin{equation*}
			\begin{split}
		|A_3|&= 	 \sum\limits_{j=1}^{3} \sum\limits_{I \in  \mathcal{I}_j^{j}} \sum\limits_{(b_1,\ldots,b_j) \in PRT_{i}[r]} \sum\limits_{(b_{m_{i_1}},    \ldots , b_{m_{i_j}}) \in ARG[(b_1,\ldots,b_j)] } |D_{b_{m_{i_1}}}^{k_{i_1}}|  |D_{b_{m_{i_2}}}^{k_{i_2}}| \cdots |D_{b_{m_{i_j}}}^{k_{i_j}}|  \\ 
				&=   \sum\limits_{I \in  \mathcal{I}_1^{1}} |D_{3}^{k_{i_1}}| + \sum\limits_{I \in  \mathcal{I}_2^{2}} (|D_{1}^{k_{i_1}}|  |D_{2}^{k_{i_2}}| + |D_{2}^{k_{i_1}}|  |D_{1}^{k_{i_2}}|)  + \sum\limits_{I \in  \mathcal{I}_{3}^{3}} |D_{1}^{k_{i_1}}|  |D_{1}^{k_{i_2}}|  |D_{1}^{k_{i_3}}| \\ 	&	=  	|D_{3}^{2} | + |D_{3}^{4} | + |D_{3}^{2} |  +|D_{3}^{2} | 
				+  |D_{1}^{2} | |D_{2}^{4} | + |D_{2}^{2} | |D_{1}^{4} | + |D_{1}^{2} | |D_{2}^{2} | + |D_{2}^{2} ||D_{1}^{2} | +  \\
					&  \ \ \ \ |D_{1}^{2} |  |D_{2}^{2} | +  |D_{2}^{2} | |D_{1}^{2} | 
				+    |D_{1}^{4} | |D_{2}^{2} |+ |D_{2}^{4} | |D_{1}^{2} | +  |D_{1}^{4} | |D_{2}^{2} | + |D_{2}^{4} | |D_{1}^{2} | + |D_{1}^{2} | |D_{2}^{2} |  \\ 
				&   \ \ \    + |D_{2}^{2} | |D_{1}^{2} |  +    |D_{1}^{2} |   |D_{1}^{4} |  |D_{1}^{2} | +    |D_{1}^{2} |   |D_{1}^{4} |   |D_{1}^{2} | 	+ 	 |D_{1}^{2} |   |D_{1}^{2} |  |D_{1}^{2} | 	 +  |D_{1}^{4} |  |D_{1}^{2} |  |D_{1}^{2} | \\		
						&  =24 + 1776 + 24 + 24 +  8 \times 544 +  16 \times 80 +  8 \times 16 +  16 \times 8 +   8 \times 16 +  16 \times 8 + \\ &  \ \ \ \  80 \times 16 +  544 \times 8  +  80 \times 16 +  544 \times 8 +  8 \times 16 + 16 \times 8 +
					8 \times 80  \times 8 +				 \\		
					 & 	\ \ \ \ 8 \times 80  \times 8	+  8 \times 8  \times 8 + 80 \times 8  \times 8 			   
						\end{split}
				\end{equation*}
						\begin{equation*}									
					|A_3|	= 35,384.				
		\end{equation*}
		\item      \sloppy{Also,  $PRT[14]=\{(3,3,3,3,2)\}$ and $PRT_1[14]=\{(3,3,3,2)\}$.   Consider the arrangements of $ (3,3,3,2)$ as $ARG[(3,3,3,2)] = \{ (3,3,3,2),(3,3,2,3),(3,2,3,3),(2,3,3,3)\}$. Now  $\mathcal{I}_4 ^5 =\{I_{23} \}$ and $\mathcal{I}_5^5 = \{\phi\}$.}  Thus,	number of vectors $x \in  \mathbb{Z}_{7}^{13}$ having weight $14$ is  
		\begin{equation*}
			\begin{split}			
				|A_{14}| &= 		\sum\limits_{I \in  \mathcal{I}_{4}^{5}} 			  \sum\limits_{(b_1,b_2,b_3,b_4) \in PRT_{1}[14]} \sum\limits_{(b_{m_{i_1}},  b_{m_{i_2}},  b_{m_{i_3}} , b_{m_{i_4}}) \in ARG[(3,3,3,2)] }   |D_{b_{m_{k_{i_1}}}}^{k_{i_1}}|  |D_{b_{m_{i_2}}}^{k_{i_2}}| \\& {\hspace{9cm}     |D_{b_{m_{i_3}}}^{k_{i_3}}| |D_{b_{m_{i_4}}}^{k_{i_4}}|  q^{k_{l_1}}} \\
		  &=(|D_{3}^{3}|  |D_{3}^{4}| |D_{3}^{2}|  |D_{2}^{2}|   +  |D_{3}^{3}| |D_{3}^{4}|  |D_{2}^{2}|  |D_{3}^{2}| + |D_{3}^{3}|  |D_{2}^{4}|  |D_{3}^{2}|  |D_{3}^{2}|  +\\& \ \  \  \ \ |D_{2}^{3}|  |D_{3}^{4}|   |D_{3}^{2}| |D_{3}^{2}|)  7^2\\ 
				&= (218 \times 1776 \times 24 \times 16 + 218 \times 1776 \times 16 \times 24 + 218 \times 544 \times 24 \times 24 \\& \ \ \ \  + 98 \times 1776 \times 24 \times 24 ) \times 49 \\
				&= 22,829,377,536.
			\end{split}
		\end{equation*}			
	\end{enumerate}
\end{example}
Calculating $ARG[b]$ for each  partition $ b= (b_1,b_2,\ldots,b_j)  \in 	PRT_{i-j}[r] $ is seemingly cumbersome. But, that is not the case  when   $k_i = k$ $\forall$ $i \in [n]$, as the  number of components $x_i \in \mathbb{F}_{q}^{k_i} $ having $\tilde{w}^{k_i}(x_i)=b_i$ will not then depend on the label $i \in [n]$ and thus it is not  necessary to  find  $ARG[b]$. If $ b= (b_1,b_2,\ldots,b_j)  \in 	PRT_{i-j}[r] $, let $t_1, t_2 , \ldots, t_l$ be the  distinct  $l$ elements from  the $j$ parts  $b_1,b_2,\ldots,  b_j$ with multiplicity   $r_1,r_2,\ldots,r_l$ respectively so that   $ \sum\limits_{s=1}^{l} r_s  t_s=b_1+b_2+\dots+ b_j $.
\begin{proposition}\label{Ar without ARG}
	Let $k_i=k$ $\forall$ $i \in [n]$. Then, for any $ r \leq n M_w$, the number of $N$-tuples  $ x \in \mathbb{F}_{q}^N$ having $w_{(P,w,\pi)}(x)= r$  is $|A_{r}| =$
	\begin{align*}
		\sum\limits_{i=1}^{n} \sum\limits_{j=1}^{i} \sum\limits_{I \in \mathcal{I}_j^{i}}  \sum\limits_{(b_1,b_2,\ldots,b_j) \in PRT_{i-j}[r]} q^{k(i-j)}    \prod\limits_{s=1}^{l} |D_{t_s}^k | ^{r_s}     {j - (r_1+r_2+\ldots+r_{s-1})\choose r_s} 
	\end{align*}
	where  $r_s$ parts among the $j$ parts in  $b_1, b_2, \dots, b_j  $  are equal to $t_s$ 
	 $(1 \leq s \leq l)$.
\end{proposition}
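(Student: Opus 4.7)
The plan is to start from the general weight enumerator obtained by combining Propositions \ref{rlessthanMw} and \ref{r=Mw}, namely
\begin{equation*}
|A_r| = \sum_{i=1}^{n}\sum_{j=1}^{i}\sum_{I\in\mathcal{I}_j^i}\sum_{b\in PRT_{i-j}[r]}\sum_{(b_{m_{i_1}},\ldots,b_{m_{i_j}})\in ARG[b]} \Bigl(\prod_{s=1}^{j} |D_{b_{m_{i_s}}}^{k_{i_s}}|\Bigr)\, q^{k_{l_1}+\cdots+k_{l_{i-j}}},
\end{equation*}
and then to specialize to the case $k_i = k$ for every $i \in [n]$. Under this assumption the exponent $k_{l_1}+\cdots+k_{l_{i-j}}$ collapses to $k(i-j)$, so the factor $q^{k(i-j)}$ can be pulled out of the two innermost summations.

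Next I would observe that, since $k_{i_s}=k$ is independent of the label $i_s$, each factor $|D_{b_{m_{i_s}}}^{k_{i_s}}|$ reduces to $|D_{b_{m_{i_s}}}^{k}|$, which depends only on the \emph{value} of the part $b_{m_{i_s}}$ and not on where it is placed. Consequently, every element of $ARG[b]$ contributes the same product to the inner sum. Grouping parts by their distinct values, if $t_1,\ldots,t_l$ are the distinct parts appearing in $b=(b_1,\ldots,b_j)$ with multiplicities $r_1,\ldots,r_l$ (so $r_1+\cdots+r_l=j$), then each arrangement contributes
\begin{equation*}
\prod_{s=1}^{j}|D_{b_{m_{i_s}}}^{k}| = \prod_{s=1}^{l} |D_{t_s}^{k}|^{r_s}.
\end{equation*}
Hence the innermost sum over $ARG[b]$ becomes $|ARG[b]|\cdot \prod_{s=1}^{l} |D_{t_s}^{k}|^{r_s}$.

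The only remaining step is to rewrite the cardinality $|ARG[b]|=\tfrac{j!}{r_1!\,r_2!\cdots r_l!}$, noted right after the definition of arrangements, as the telescoping product of binomial coefficients
\begin{equation*}
\frac{j!}{r_1!\,r_2!\cdots r_l!} \;=\; \binom{j}{r_1}\binom{j-r_1}{r_2}\cdots \binom{j-r_1-\cdots-r_{l-1}}{r_l} \;=\; \prod_{s=1}^{l}\binom{j-(r_1+\cdots+r_{s-1})}{r_s},
\end{equation*}
which gives precisely the expression in the statement. Substituting this back yields the claimed formula. There is no real obstacle here: the proof is essentially a careful bookkeeping argument, and the main thing to be careful about is making sure the multinomial-to-binomial rewriting is justified and that the factor $q^{k(i-j)}$ is correctly accounting for the arbitrary block contents at non-maximal positions of $I$.
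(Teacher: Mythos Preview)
Your proposal is correct and follows essentially the same approach as the paper. The only cosmetic difference is that the paper counts directly by sequentially choosing, for each distinct value $t_s$, the $r_s$ maximal positions it occupies (which yields the binomial product immediately), whereas you first pass through $|ARG[b]|=\tfrac{j!}{r_1!\cdots r_l!}$ and then rewrite it as the telescoping product of binomials; the two computations are equivalent.
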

\begin{proof}
	Let $I \in \mathcal{I}_{j}^{i}$ with  $|Max(I)|=j$ and let $ b=(b_1,b_2,\ldots,b_j)  \in PRT _{i-j}[r] $.  Let $t_1, t_2 , \ldots, t_l$ be the distinct $l$  elements from the set of $j$ parts in $b$ such that $r_s$ is the multiplicity of  $t_s$ ($1 \leq s \leq l$) so that $\sum\limits_{s=1}^{l}  r_s  = j $ and $\sum\limits_{s=1}^{l} r_s t_s  = b_1+b_2+\cdots +b_j =r - (i-j) M_w $.
	Since $t_s$  occurs $r_s$ times, the total number of choices for $x_s \in \mathbb{F}_{q}^{k_s} $ such that $\tilde{w}^{k_s} (x_s)=t_s$ in $r_s$ places of $Max(I)$ is $|D_{t_s}^k | ^{r_s} {j \choose r_s}$. Then for each $ (b_1,b_2,\ldots,b_j)  \in PRT _{i-j}[r] $ and $I \in \mathcal{I}_{j}^{i}$, the total number of $N$-tuples $x \in \mathbb{F}_q ^N$ having $w_{(P,w,\pi)}(x)= r $ is $ q^{k(i-j)} \prod\limits_{s=1}^{l}|D_{t_s}^k | ^{r_s} {j - (r_1+r_2+\cdots+r_{s-1})\choose r_s} $. Hence, $	|A_{r}| =$
	\begin{align*}
		\sum\limits_{i=1}^{n} \sum\limits_{j=1}^{i} \sum\limits_{I \in \mathcal{I}_j^{i}} \sum\limits_{ (b_1,b_2,\ldots,b_j)  \in PRT _{i-j} [r] }  q^{k(i-j)} \prod\limits_{s=1}^{l}
		|D_{t_s}^k | ^{r_s}  {j - (r_1+r_2+\ldots+r_{s-1})\choose r_s} 
	\end{align*} where  $r_0 = 0$ and $|D_{t_s}^k |= ( \sum\limits_{i=0}^{t_s} |D_{i}|)^{k}-( \sum\limits_{i=0}^{t_s-1} |D_{i}|)^k$.
\end{proof}
\begin{corollary}
	If $k_i=k$ $\forall$ $i \in [n]$ then the number of $N$-tuples  $ x \in \mathbb{F}_{q}^N$ having $w_{(P,w,\pi)}(x)= n M_w$  is 
$
		({ q^{k}-( q -  |D_{M_{w}}|) ^{k} })^{t} q^{k(n-t)}  
	$ 
	where $t$ is the  number of maximal elements in the given poset $P$. 
\end{corollary}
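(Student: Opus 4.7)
The plan is to isolate a clean characterization of the vectors achieving the maximum possible $(P,w,\pi)$-weight, then count directly without going through the general machinery of Proposition \ref{Ar without ARG}.

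First, I would observe that $nM_w$ is the largest value $w_{(P,w,\pi)}(x)$ can take, since each of the $n$ block positions can contribute at most $M_w$ (either via $\tilde{w}^{k_i}(x_i) \le M_w$ on a maximal position, or by the fixed contribution $M_w$ on a non-maximal position inside the ideal, or $0$ for positions outside the ideal). Thus $w_{(P,w,\pi)}(x) = nM_w$ forces two things simultaneously: $|I_x^{P,\pi}| = n$, i.e.\ $\langle supp_\pi(x)\rangle = [n]$, and $\tilde{w}^{k}(x_i) = M_w$ for every $i \in M_x^{P,\pi}$.

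Next, I would translate $\langle supp_\pi(x)\rangle = [n]$ into an explicit condition on $x$. Since every element of $P$ lies below some maximal element of $P$, the ideal generated by $supp_\pi(x)$ equals $[n]$ if and only if every maximal element of $P$ belongs to $supp_\pi(x)$. In that case $M_x^{P,\pi}$ is exactly the set of maximal elements of $P$, which has cardinality $t$. Combined with the previous paragraph, the vectors contributing to $|A_{nM_w}|$ are precisely those with $x_i \in D_{M_w}^k$ for each of the $t$ maximal positions $i$ of $P$, and $x_i \in \mathbb{F}_q^k$ arbitrary for each of the $n-t$ non-maximal positions (arbitrary is fine because the ideal condition is already secured by the maximal positions, and the contribution on non-maximal positions inside the ideal is automatically $M_w$).

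Finally, I would count: the choices at the $t$ maximal positions contribute $|D_{M_w}^{k}|^{t}$, and the choices at the $n-t$ non-maximal positions contribute $q^{k(n-t)}$. Invoking the ``in particular'' clause of Proposition \ref{D_r^j}, namely $|D_{M_w}^{k}| = q^k - (q - |D_{M_w}|)^k$, gives the stated formula. I do not expect a real obstacle here; the only subtle point is justifying that non-maximal blocks can be entirely arbitrary (including zero) without disturbing either $|I_x^{P,\pi}| = n$ or the maximality set, which is exactly the observation that maximality in $P$ is inherited by $I_x^{P,\pi}$ when the latter equals $[n]$.
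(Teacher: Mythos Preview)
Your argument is correct. The paper states this as an unproved corollary of Proposition~\ref{Ar without ARG}; specializing that formula to $r=nM_w$ forces $i=n$, $j=t$, and the single partition $(M_w,\ldots,M_w)$, which is exactly the characterization you establish directly, so the two routes coincide in content even though you bypass the general formula.
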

As the $d_{(P,w,\pi)}$-metric is translation invariant, $B_{r}(x) = x + B_{r}(0)$ for an $ x \in \mathbb{F}_{q}^N $ where $ 0 \in \mathbb{F}_{q}^N $.  Moreover,  $| B_{r}(0) | = 1 + \sum\limits_{t=1}^{r} 	|S_t (0)|$ and $ |S_t (0)| =|A_t| $. Thus,
\begin{proposition} \label{wpirball}
	The number of $N$-tuples in a  $(P,w,\pi)$-ball of radius $r$ centered at $ x \in \mathbb{F}_{q}^N $ is  $| B_{r}(x) | = 1 + \sum\limits_{t=1}^{r} 	|A_t|$. 
\end{proposition}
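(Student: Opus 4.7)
The plan is to reduce the cardinality of an arbitrary $(P,w,\pi)$-ball to a sum involving the sphere sizes $|A_t|$, which were already computed in Propositions \ref{rlessthanMw}, \ref{r=Mw}, and \ref{Ar without ARG}. This is a short routine verification rather than a substantive argument, and I expect no genuine obstacle.

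First, I would invoke translation invariance of $d_{(P,w,\pi)}$, already noted in the text after Remark \ref{becomes}: since the metric is induced by a weight, $d_{(P,w,\pi)}(x+z,y+z)=d_{(P,w,\pi)}(x,y)$, and hence $B_r(x)=x+B_r(0)$. In particular, the map $y\mapsto y-x$ is a bijection from $B_r(x)$ onto $B_r(0)$, so $|B_r(x)|=|B_r(0)|$. This reduces the problem to counting vectors in $B_r(0)$.

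Next, I would decompose $B_r(0)$ as a disjoint union according to the exact weight. Writing $B_r(0)=\{x\in\mathbb{F}_q^N : w_{(P,w,\pi)}(x)\le r\}$, partition it as $\{0\}\cup\bigsqcup_{t=1}^{r}S_t(0)$, where $S_t(0)=\{x\in\mathbb{F}_q^N : w_{(P,w,\pi)}(x)=t\}$. The disjointness is immediate because each $x$ has a unique $(P,w,\pi)$-weight. Then I would observe that $S_t(0)$ coincides with the set $A_t$ defined at the start of Section 3, so $|S_t(0)|=|A_t|$.

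Putting these ingredients together yields
\begin{equation*}
    |B_r(x)| = |B_r(0)| = 1 + \sum_{t=1}^{r}|S_t(0)| = 1 + \sum_{t=1}^{r}|A_t|,
\end{equation*}
as required. The only thing worth flagging is the convention that $A_0=\{\bar 0\}$ accounts for the single summand $1$ corresponding to the zero vector; after that the result follows directly from the weight distribution results of the preceding section.
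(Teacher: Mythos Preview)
Your argument is correct and follows exactly the route the paper takes: the paragraph immediately preceding the proposition already records translation invariance ($B_r(x)=x+B_r(0)$), the decomposition $|B_r(0)|=1+\sum_{t=1}^{r}|S_t(0)|$, and the identification $|S_t(0)|=|A_t|$, so your write-up simply spells out that sketch in full.
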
 
In the remaining part of this section, we show how  $|A_r| $ can be arrived at for various spaces viz., $(P,w)$-space, $(P,\pi)$-space, $\pi$-space and $P$-space.
\subsection{$(P,w)$-space}
The  $(P,w,\pi)$-space becomes $(P,w)$-space \cite{wcps} by considering   $k_i =1$ for all $i \in [n]$ (see Remark \ref{becomes}). Here $N=\sum\limits_{i=1}^{n} k_i =n$ . For $u =( u_1 , u_2 , \ldots ,u_n ) \in \mathbb{F}_{q}^{n} $,  $ supp_{\pi}( u)=\{i \in [n] : u_i \neq 0\} = supp(u)$ and $ \tilde{w}^{k_i}{(u_i)} =  w{(u_i)} ~ \forall ~i \in [n]$. Thus,    
\begin{equation*}
	w_{(P,w)}(u)= \sum\limits_{i \in M_{u}^P}  w{(u_i)} + \sum\limits_{i \in {I_u}^{P} \setminus M_{u}^P} M_w.  
\end{equation*}
\par 
   Now, $A_r = \{u= ( u_1 , u_2 , \ldots ,u_n ) \in \mathbb{F}_{q}^{n}  : w_{(P,w)}(u)=r  \}$
   and by Proposition \ref{rlessthanMw}, for any $1 \leq r \leq M_{w} -1$, the number of $N$-tuples  in $ \mathbb{F}_{q}^n$ having $w_{(P,w)}(x)= r$  is
\begin{align*}
	|A_{r}| 
	&= \sum\limits_{j=1}^{n} \sum\limits_{I \in  \mathcal{I}_j^{j}} \sum\limits_{(b_1,b_2,\ldots,b_j) \in PRT_0[r]} \sum\limits_{(b_{m_{i_1}},  b_{m_{i_2}},  \ldots , b_{m_{i_j}} ) \in ARG[b] } |D_{b_{m_{i_1}}}| |D_{b_{m_{i_2}}}| \cdots |D_{b_{m_{i_j}}}| \\
	&= \sum\limits_{j=1}^{n} \sum\limits_{I \in  \mathcal{I}_j^{j}} \sum\limits_{(b_1,b_2,\ldots,b_j) \in PRT_0[r]}  |D_{b_{m_{i_1}}}|  |D_{b_{m_{i_2}}}| \cdots |D_{b_{m_{i_j}}}| |ARG[(b_1,b_2,\ldots,b_j)]|
\end{align*}
wherein, for an $I \in \mathcal{I}_{j}^{j}$, $\{i_1,i_2,\ldots,i_j\}$ is the set of maximal elements of $I$.
\par By Proposition \ref{r=Mw}, for $ r =t M_{w} + k$; $1 \leq t \leq n-1$, $0 \leq k \leq M_w$, we can get, $|A_{tM_w + k}|=$
\begin{align*}
& 		\sum\limits_{i=0}^{t} \sum\limits_{j=1}^{n} \sum\limits_{I \in  \mathcal{I}_{j}^{i+j}} \sum\limits_{(b_1,b_2\ldots,b_j) \in PRT_{i}[r]} \sum\limits_{(b_{m_{i_1}}, b_{m_{i_2}},   \ldots , b_{m_{i_j}} ) \in ARG[b] } |D_{b_{m_{i_1}}}|  |D_{b_{m_{i_2}}}| \cdots |D_{b_{m_{i_j}}}|  q^{i} \\	&=\sum\limits_{i=0}^{t} \sum\limits_{j=1}^{n} \sum\limits_{I \in  \mathcal{I}_{j}^{i+j}} \sum\limits_{(b_1,b_2,\ldots,b_j) \in PRT_{i}[r]} |D_{b_{m_{i_1}}}|  |D_{b_{m_{i_2}}}| \cdots |D_{b_{m_{i_j}}}|   q^{i} |ARG[(b_1,b_2,\ldots,b_j)]|
\end{align*}
 \par Alternatively,  from Proposition \ref{Ar without ARG},  we can determine  the cardinality of $A_r$ without finding $ARG[b]$, as $k_i=1 ~\forall ~ i \in [n]$. 
\begin{corollary} \label{Ar for pw dist without ARG}
	For any $1 \leq r \leq n M_{w} $, the number of $n$-tuples  $u \in \mathbb{F}_{q}^n$ having $w_{(P,w)}(u)= r$  is
	\begin{align*}
		|A_{r}| = 	\sum\limits_{i=1}^{n} \sum\limits_{j=1}^{i} \sum\limits_{I \in \mathcal{I}_j^{i}}  \sum\limits_{ (b_1,b_2,\ldots,b_j)  \in PRT _{i-j} [r] } q^{i-j}  \prod\limits_{s=1}^{l}  {|D_{t_s}|}^{r_s} {j - (r_1+r_2+\ldots+r_{s-1})\choose r_s} 
	\end{align*}
	where  $r_s$ parts among the $j$ parts in  $b_1, b_2, \dots, b_j  $  are equal to $t_s$  ($1 \leq s \leq l$).
\end{corollary}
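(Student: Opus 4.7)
The plan is to observe that this corollary is a direct specialization of Proposition \ref{Ar without ARG} to the case $k_i = k = 1$ for every $i \in [n]$, using the identification of the $(P,w,\pi)$-space with the $(P,w)$-space established in Remark \ref{becomes}(i). Once that identification is in place, the weight function $w_{(P,w)}$ on $\mathbb{F}_q^n$ coincides with the restriction of $w_{(P,w,\pi)}$ on $\mathbb{F}_q^N$ (with $N = n$), so the set $A_r$ and its cardinality from Proposition \ref{Ar without ARG} transfer verbatim.

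First, I would remind the reader that when $k_i = 1$ for all $i$, the block $\mathbb{F}_q^{k_i}$ reduces to $\mathbb{F}_q$ and the induced block weight satisfies $\tilde{w}^{1}(u_i) = w(u_i)$; consequently the weight classes satisfy $D_r^1 = D_r$, so $|D_{t_s}^k| = |D_{t_s}|$. Next, I would substitute $k = 1$ into the formula of Proposition \ref{Ar without ARG}: the factor $q^{k(i-j)}$ becomes $q^{i-j}$, the product $\prod_{s=1}^{l}|D_{t_s}^k|^{r_s}$ becomes $\prod_{s=1}^{l}|D_{t_s}|^{r_s}$, and the multinomial-type factor $\binom{j-(r_1+\cdots+r_{s-1})}{r_s}$ is preserved untouched since it depends only on the multiplicities of the parts of the partition $b \in PRT_{i-j}[r]$ and not on the block sizes.

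The main thing to verify is that no combinatorial ingredient is lost in the specialization. The outer summations over $i$, $j$, $I \in \mathcal{I}_j^i$, and $b \in PRT_{i-j}[r]$ depend only on the poset $P$, the weight $w$ on $\mathbb{F}_q$, and the integer $r$; none of these involve the label map $\pi$, so they remain unchanged. Hence every term of the $(P,w,\pi)$-formula specializes cleanly to the corresponding term in the $(P,w)$-formula, giving the claimed expression for $|A_r|$.

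There is no real obstacle: the corollary is a transcription exercise rather than an independent argument. The only point that warrants an explicit sentence is the equality $|D_{t_s}^1| = |D_{t_s}|$, which is immediate from the definition $D_r^k = \{u \in \mathbb{F}_q^k : \tilde{w}^k(u) = r\}$ at $k = 1$ (and is also consistent with Proposition \ref{D_r^j}, since $(\sum_{i=0}^{r}|D_i|)^{1} - (\sum_{i=0}^{r-1}|D_i|)^{1} = |D_r|$).
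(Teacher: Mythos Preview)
Your proposal is correct and follows exactly the paper's own approach: the corollary is stated immediately after the remark ``Alternatively, from Proposition \ref{Ar without ARG}, we can determine the cardinality of $A_r$ without finding $ARG[b]$, as $k_i=1$ for all $i \in [n]$,'' and no separate proof is given. Your explicit verification that $|D_{t_s}^1| = |D_{t_s}|$ and $q^{k(i-j)} = q^{i-j}$ under $k=1$ is precisely the substitution the paper leaves implicit.
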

\begin{corollary}
	The number of $n$-tuples  $ x \in \mathbb{F}_{q}^n$ having the $(P,w,\pi) $-weight $nM_w$  is 	$ {|D_{M_w}|}^{t} q^{n-t}  $ where $t$ is the  number of maximal elements of the given poset $P$. 
\end{corollary}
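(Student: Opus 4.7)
The plan is to obtain this as an immediate specialization of the preceding corollary. Since the $(P,w)$-space is exactly the $(P,w,\pi)$-space with $k_i = 1$ for all $i \in [n]$ (Remark \ref{becomes}(i)), I would substitute $k = 1$ into the formula
\begin{equation*}
 \bigl(q^{k}-(q-|D_{M_w}|)^{k}\bigr)^{t}\, q^{k(n-t)},
\end{equation*}
which collapses to $|D_{M_w}|^{t} q^{n-t}$. Formally, no new argument is required beyond invoking the previous corollary; the proof is one line.

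To give the reader an intuitive verification, I would also include a short direct combinatorial argument based on sharpness of the trivial bound $w_{(P,w)}(u) \le n M_w$. Writing
\begin{equation*}
 w_{(P,w)}(u) = \sum_{i \in M_u^P} w(u_i) \;+\; \bigl(|I_u^P| - |M_u^P|\bigr) M_w \;\le\; |I_u^P|\, M_w \;\le\; n M_w,
\end{equation*}
one sees that equality forces (a) $I_u^P = [n]$ and (b) $w(u_i) = M_w$ for every maximal $i$. Under (a) the maximal elements of $I_u^P$ coincide with the maximal elements of $P$, so $|M_u^P| = t$. Then (b) becomes $u_i \in D_{M_w}$ for each $i \in \mathrm{Max}(P)$.

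The count is then immediate: the $t$ maximal coordinates each have $|D_{M_w}|$ choices, while the $n-t$ non-maximal coordinates are completely unconstrained (any element of $\mathbb{F}_q$, including zero), because nonzeroness at the maximal positions already guarantees $\mathrm{Max}(P) \subseteq \mathrm{supp}(u)$ and hence $\langle \mathrm{supp}(u) \rangle = [n]$ in any finite poset. Multiplying yields $|D_{M_w}|^{t} q^{n-t}$.

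There is no real obstacle here; the only point worth checking carefully is the implicit observation that in a finite poset every element lies below some maximal element, which is what lets one drop any further constraint on the non-maximal coordinates. Everything else is a direct specialization.
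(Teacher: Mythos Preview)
Your proposal is correct and matches the paper's intent: the paper states this corollary without proof, as an immediate specialization of the earlier corollary (after Proposition~\ref{Ar without ARG}) giving $\bigl(q^{k}-(q-|D_{M_w}|)^{k}\bigr)^{t} q^{k(n-t)}$ when all $k_i=k$, and your substitution $k=1$ is exactly the one-line derivation intended. Your supplementary direct combinatorial argument is also correct and is a welcome addition, since it makes transparent why only the full ideal $[n]$ contributes and why the non-maximal coordinates are unconstrained.
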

If $P=([n],\preceq)$ is a chain (here $1 \preceq 2 \preceq \ldots \preceq n$), each ideal $I \in \mathcal{I}(P)$  will have only one maximal element.  Let  $v \in \mathbb{F}_q^n$,  $I_v^P=  \langle supp (v) \rangle $ and $j$ be the maximal element of $I_v^P$. Then, 
$	w_{(P,w)}(v)= w{(v_j)} + (|I_v^P|-1) M_w $.   Moreover, 
\begin{itemize}
	\item   For $1 \leq r \leq M_{w} $, we have $v \in A_r$ iff $| \langle supp (v) \rangle | = 1 $, and hence $	|A_{r}| = |D_{r}|$.
	\item	For $r= tM_{w}+ k $, where $0 < k \leq M_{w}$ and $1 \leq t \leq n-1$, we have $v \in A_r$ iff $| \langle supp (v) \rangle | = t + 1 $. Thus,
	$|A_{tM_{w} +  k }| =  q^{t} |D_{k}|$.
\end{itemize} 
\subsection{$(P,\pi)$-space}
 Given a poset $P=([n],\preceq)$, a label map $\pi$ on $[n]$, and considering  $w$ to be the Hamming weight on  $ \mathbb{F}_{q}$,  the $(P,w,\pi)$-space becomes poset block or  $(P,\pi)$-space \cite{Ebc} (see Remark \ref{becomes}). Now $M_w= 1$, and hence we have
   $|D_0| = 1$, $|D_1| =  q-1$ and $|D_1^{k_j}| = q ^{k_j}- 1$ for $1 \leq j \leq n$.  
Clearly,  $x$ $\in A_r$ iff $ | \langle supp_{\pi} (x) \rangle | =r $.  
  \par   As $M_w= 1$, we have $|PRT_{i-j}[r]|=1$  for any $r \leq n$, for      $PRT_{i-j}[r]=\{(1,1,\ldots,1) : 1+1+\cdots+1=r-(i-j)\}$.   
 From Proposition  \ref{r=Mw},   the number of $N$-tuples  $ x \in \mathbb{F}_{q}^N$ having $w_{(P,\pi)}(x)= r$  is
\begin{align*}
	\sum\limits_{i=0}^{r-1} \sum\limits_{j=1}^{n} \sum\limits_{I \in  \mathcal{I}_j^{j+i}} \sum\limits_{(1,1,\ldots,1) \in PRT_{i}[r]}   |D_{1}^{k_{i_1}}|  |D_{1}^{k_{i_2}}| \cdots |D_{1}^{k_{i_j}}| q^{k_{l_1} + k_{l_2} + \cdots + k_{l_i}} 
\end{align*}
Since $x$ $\in A_r$ iff $  I_x^P \in \mathcal{I}_j^{r} $ for some $ j \leq r$, we have
\begin{equation*}
	|A_{r}| =   \sum\limits_{j=1}^{r} \sum\limits_{I \in  \mathcal{I}_j^{r}}  (q^{k_{i_1}}-1) (q^{k_{i_2}}-1)  \cdots (q^{k_{i_j}}-1)    q^{k_{l_1} + k_{l_2} + \cdots + k_{l_{r-j}}} 
\end{equation*}
where $ Max(I) = \{i_1,i_2,\ldots,i_j\} $ and $ I  \setminus Max(I) = \{l_1, l_2, \dots, l_{r-j} \}$ for a given  $I  \in \mathcal{I}_j^{r}$.
Moreover, if $k_i = k$ for all $i \in [n]$, then $|A_{r}| =   \sum\limits_{j=1}^{r} \sum\limits_{I \in  \mathcal{I}_j^{r}}  (q^{k}-1)^{j}   q^{k(r-j)} 
$.
\begin{corollary}
	Let	$k_i = k$ for all $i \in [n]$.	Then, $x \in A_n$ iff  $I_x^{P,\pi}  \in  \mathcal{I}_t^{n} $ where $t$ is the  number of maximal elements of the given poset $P$. Moreover,
	$ | A_n | =  (q^{k}-1)^{t} q^{k(n-t)}  $. 
\end{corollary}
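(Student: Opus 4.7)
The plan is to handle the characterization first and then the enumeration, both of which follow quickly from results already in hand. For the biconditional, I would use the identity $w_{(P,\pi)}(x)=|I_x^{P,\pi}|$ (valid when $w$ is the Hamming weight, so that $M_w=1$). Then $x\in A_n$ means $|I_x^{P,\pi}|=n$, and since $I_x^{P,\pi}\subseteq[n]$ this forces $I_x^{P,\pi}=[n]$; the set of maximal elements of $[n]$ as a sub-poset coincides with $Max(P)$, which has cardinality $t$ by hypothesis, so $I_x^{P,\pi}\in\mathcal{I}_t^n$. The converse is immediate, since any $I\in\mathcal{I}_t^n$ has cardinality $n$, so the corresponding vectors lie in $A_n$.

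For the count, the cleanest route is to specialize the formula
\[
|A_r|=\sum_{j=1}^{r}\sum_{I\in\mathcal{I}_j^r}(q^k-1)^j\,q^{k(r-j)}
\]
derived in this subsection under the assumption $k_i=k$, to $r=n$. By the first part, the unique nonempty stratum is $\mathcal{I}_t^n=\{[n]\}$, so the double sum collapses to the single term $(q^k-1)^t\,q^{k(n-t)}$, which is exactly the claimed value.

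As a direct sanity check I would also note that $\langle supp_\pi(x)\rangle=[n]$ holds iff $supp_\pi(x)\supseteq Max(P)$: every $i\in[n]$ must lie below some element of $supp_\pi(x)$, and a maximal element of $P$ can only lie below itself. Hence the $t$ blocks indexed by the positions in $Max(P)$ must be nonzero ($q^k-1$ choices each), while the remaining $n-t$ blocks are unconstrained ($q^k$ choices each), reproducing the product. No substantive obstacle is expected; the corollary is essentially a clean specialization of the preceding proposition, and the only mild subtlety is the poset-theoretic observation above that generating $[n]$ as an ideal forces the support to contain every maximal element of $P$.
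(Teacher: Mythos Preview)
Your proposal is correct and matches the paper's implicit approach: the corollary is stated without proof immediately after the formula $|A_{r}|=\sum_{j=1}^{r}\sum_{I\in\mathcal{I}_j^{r}}(q^{k}-1)^{j}q^{k(r-j)}$, and your argument---specializing to $r=n$ and noting that $\mathcal{I}_j^n$ is empty except for $j=t$ with $\mathcal{I}_t^n=\{[n]\}$---is precisely the intended derivation. The direct count via $supp_\pi(x)\supseteq Max(P)$ is a valid independent check but not needed.
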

If  $P=([n],\preceq)$ is  a chain, there is only one  maximal element in any ideal $I \in \mathcal{I}(P)$ and $|\mathcal{I}_1^{r}|=1$ for $1 \leq r \leq n $. Let  $v \in \mathbb{F}_q^n$ and  $I_v^{P,\pi}= \langle supp_{\pi}(v) \rangle $ with maximal element $j$. Then, 
\begin{align*}
	w_{(P,w,\pi)}(v)= \tilde{w}^{k_j}{(v_j)} + (|I_v^{P,\pi}|-1) M_w   = |I_v^{P,\pi}|= |\langle j \rangle| = w_{(P,\pi)}(v)
\end{align*}
This weight coincides with the RT-weight if $k_i=1$  for each $i \in [n]$. 	  
We have $x$ $\in A_r$ iff $ | \langle supp_{\pi} (x) \rangle | =r $, and thus $
|A_{r}| = (q^{k_r}-1)  q^{k_1+k_2+\dots+k_{r-1}}$ for $1 \leq r \leq n $. Moreover, if $k_i = k$ for each $i \in [n]$, then $ |A_{r}| = (q^{k}-1)  q^{k({r-1})} $ for $1 \leq r \leq n $.
\subsection{ $\pi$-space}
The $(P,w,\pi)$-space becomes the classical $( \mathbb{F}_q^N,d_{\pi})$-space \cite{fxh} by considering  $P=([n],\preceq)$ as  an anti-chain and $w$ as the Hamming weight on $\mathbb{F}_q$ (see Remark \ref{becomes}). Now $A_r = \{x= x_{1} \oplus x_{2} \oplus \ldots \oplus x_{s} \in \mathbb{F}_{q}^{N}  : w_{\pi}(x)=r  \}$ for each $1 \leq r \leq n$. Clearly,  $x$ $\in A_r$ iff $ | supp_{\pi} (x) | =r $.  
\par Now each subset of $[n] $ is an ideal. Let $P_r$ be the collection of all subsets $B$ of $[n]$ such that $|B|=r$. If $B=\{i_1,i_2,\ldots,i_r\} \subseteq [n]$ then  $ | A_r | =  \sum\limits_{  B \in P_r} (q^{k_{i_1}}-1)(q^{k_{i_2}}-1)\cdots(q^{k_{i_r}}-1) $. 	If $k_i=k$ for each $i \in [n]$, $ | A_r | =  {n \choose r} (q^{k}-1)^r $ for $1 \leq r \leq n $.  
\subsection{ $P$-space}
 The $(P,w,\pi)$-space becomes the classical $(\mathbb{F}_{q}^{n}, d_{P})$-poset space \cite{Bru}  by considering $w$ 
 as the Hamming weight on $\mathbb{F}_q$ and $k_i=1$ $\forall$ $i \in [n]$ (see Remark \ref{becomes}). 
 As $M_w=1$,  $\tilde{w}^{k_s}(x_s)=w(x_s)=1$  for all $x_s \in \mathbb{F}_q^{k_s}\setminus \{0\}$, and  $|PRT_{i-j}[r]|=1$  for any $r \leq n$  so that $ARG[b]=PRT_{i-j}[r]$.   For  each $ 1 \leq r \leq n$, if  $A_r = \{x= (x_{1},  x_{2}, \ldots,  x_{n}) \in \mathbb{F}_{q}^{n}  : w_{P}(x)=r  \}$, then  $x$ $\in A_r$ iff $ | \langle supp (x) \rangle | =r $. Therefore, $	|A_{r}| = \sum\limits_{j=1}^{r} \sum\limits_{I \in  \mathcal{I}_j^{r}}  ( q-1 )^{j}  q^{r-j}  $. In particular,   $ A_n= (q-1)^{t} q^{n-t}$ where $t$ is the number of maximal elements in  $P$. If  $P=([n],\preceq)$ is a chain, then $|A_{r}| ={q}^{r-1} {(q-1)}   $ for $ 1 \leq r \leq n$.   
\section{Singleton Bound for $(P,w,\pi)$-code }
In line with the  Singleton bounds for the poset codes \cite{hkmdspc}, the  $(P,\pi)$-codes  \cite{bkdnsr} and  the  pomset codes \cite{gr}, we extend the Singleton bound  for $(P,w,\pi)$-block codes in this section. Let $d_{(P,w,\pi)}\mathbb{(C)}$ be the minimum distance of a block code $\mathbb{C}$ of length $N$ over $\mathbb{F}_q$. Recall that $\mathcal{I}^{i}$ denote the collection of all ideals in $\mathcal{I}(P)$ whose cardinality is $i$.
\begin{theorem}[Singleton bound for $(P,w,\pi)$-block code] \label{sbwpic}
	Let $\mathbb{C} $  be a $(P,w,\pi)$-code of length $N = k_1 + k_2 + \ldots + k_n$ over $\mathbb{F}_q$ with minimum distance $d_{(P,w,\pi)}(\mathbb{C})$ and let $r = \big\lfloor \frac{d_{(P,w,\pi)} \mathbb{(C)}-1}{M_w} \big\rfloor $. Then    $ \max\limits_{J \in  \mathcal{I}^{r}}    \big\{\sum_{i \in J} k_{i}\big\} \leq N - \lceil log_{q}|\mathbb{C}| \rceil $.  
\end{theorem}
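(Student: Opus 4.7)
The plan is to mimic the standard projection proof of the classical Singleton bound, but using ideals of the poset to choose the coordinate positions that are discarded. Pick an ideal $J \in \mathcal{I}^{r}$ that attains the maximum $\max_{J \in \mathcal{I}^r}\{\sum_{i \in J} k_i\}$, and consider the projection $\varphi : \mathbb{F}_q^N \to \mathbb{F}_q^{N - \sum_{i \in J} k_i}$ that erases the blocks indexed by $J$, i.e.\ keeps only the coordinates $x_s \in \mathbb{F}_q^{k_s}$ with $s \in [n] \setminus J$.

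The crux is to show that $\varphi$ restricted to $\mathbb{C}$ is injective. Suppose $c_1, c_2 \in \mathbb{C}$ with $\varphi(c_1) = \varphi(c_2)$; then the nonzero blocks of $c = c_1 - c_2$ all lie inside $J$, so $supp_\pi(c) \subseteq J$. Since $J$ is already an ideal, $I_c^{P,\pi} = \langle supp_\pi(c) \rangle \subseteq J$, which gives $|I_c^{P,\pi}| \le |J| = r$. Using $\tilde{w}^{k_i}(c_i) \le M_w$ for every $i$, the definition of the $(P,w,\pi)$-weight yields
\begin{equation*}
    w_{(P,w,\pi)}(c) = \sum_{i \in M_c^{P,\pi}} \tilde{w}^{k_i}(c_i) + \sum_{i \in I_c^{P,\pi} \setminus M_c^{P,\pi}} M_w \le |I_c^{P,\pi}| M_w \le r M_w.
\end{equation*}
By the choice $r = \lfloor (d_{(P,w,\pi)}(\mathbb{C}) - 1)/M_w \rfloor$ we have $rM_w \le d_{(P,w,\pi)}(\mathbb{C}) - 1 < d_{(P,w,\pi)}(\mathbb{C})$, so if $c_1 \ne c_2$ this would contradict the definition of the minimum distance. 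Hence $c_1 = c_2$ and $\varphi|_{\mathbb{C}}$ is injective.

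Injectivity gives $|\mathbb{C}| \le q^{N - \sum_{i \in J} k_i}$, i.e.\ $\sum_{i \in J} k_i \le N - \log_q |\mathbb{C}|$. Since the left-hand side is an integer, we may replace the right-hand side by $N - \lceil \log_q |\mathbb{C}| \rceil$, and the choice of $J$ as a maximizer over $\mathcal{I}^{r}$ yields exactly the stated bound. The only subtlety worth flagging is the use of the hypothesis that $J$ is an ideal (so that $\langle supp_\pi(c) \rangle$ stays inside $J$); without this one would only control $|supp_\pi(c)|$ rather than $|I_c^{P,\pi}|$, and the weight estimate would fail because of the contribution of non-maximal positions counted at weight $M_w$.
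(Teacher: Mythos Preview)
Your proof is correct and follows essentially the same projection argument as the paper: for any ideal $J \in \mathcal{I}^{r}$, distinct codewords cannot agree on all blocks outside $J$, since otherwise their difference would have $(P,w,\pi)$-weight at most $|J|\,M_w \le r M_w \le d_{(P,w,\pi)}(\mathbb{C})-1$. Your write-up is in fact slightly more streamlined than the paper's, which begins by selecting codewords realizing the minimum distance and an ideal $J \subset \langle supp_\pi(c_1-c_2)\rangle$ of size $r$ --- a setup that is not actually used in the decisive step, since the injectivity argument is then carried out for an arbitrary $J \in \mathcal{I}^{r}$ exactly as you do.
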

\begin{proof}
	\sloppy{There exist two distinct codewords $c_1, c_2 \in \mathbb{F}_q^N $ such that $d_{(P,w,\pi)} (\mathbb{C}) = d_{(P,w,\pi)} {(c_1,c_2)} $. Let $I = \langle supp_{\pi} (c_1-c_2) \rangle $. If one denotes $c_1=c_{11} \oplus c_{12} \oplus \cdots \oplus  c_{1n} $, $c_2=c_{21} \oplus c_{22} \oplus \cdots \oplus  c_{2n} $, with $c_{ij}  \in \mathbb{F}_q^{k_j}$ for $i=1,2$, then $d_{(P,w,\pi)} {(c_1,c_2)} = \sum\limits_{i \in Max(I)}  \tilde{w}^{k_i}(c_{1i}-c_{2i}) + \sum\limits_{i \in {I \setminus Max(I)}} M_w$. Now, $d_{(P,w, \pi)}(\mathbb{C}) - 1 < d_{(P,w, \pi)}(\mathbb{C}) \leq M_{w} |I|$.  Thus,}    $\big\lfloor \frac{d_{(P,w,\pi)} \mathbb{(C)}-1}{ M_w} \big\rfloor < |I|$. Let $r = \big\lfloor \frac{d_{(P,w,\pi)} \mathbb{(C)}-1}{ M_w}\big\rfloor$. There always exists an ideal $J$ of cardinality $r$ such that $J \subset I$ (see \cite{hkmdspc}).  For any ideal $J \in \mathcal{I}^{r}$, every two distinct codewords of $\mathbb{C}$ must be different in some $i^{th}$ block in $[n]$ for an  $i \not \in J$. Otherwise 
	$ d_{(P,w, \pi)} {(c_1,c_2)} = \sum\limits_{i \in Max(J) }  \tilde{w}^{k_i}(c_{1i}-c_{2i}) + \sum\limits_{i \in {J}  \setminus  Max(J)}	M_w \leq |J| M_w \leq d_{(P,w, \pi)}(\mathbb{C}) - 1$, which is a contradiction. Thus, every two distinct codewords of $\mathbb{C}$ must be different for some $i$-label in $[n]$ for an  $i \not \in J$. This proves that there is an injective map from $\mathbb{C}$ to $\mathbb{F}_q^{N - \sum_{i \in J} k_{i} }$. Thus, $|\mathbb{C}| \leq q^{N -\sum_{i \in J} k_{i} }$ and  hence,  
	$ { \sum_{i \in J} k_{i} } \leq  N - \lceil \log_q |\mathbb{C}| \rceil$. 
	As this is true for any ideal $J \in \mathcal{I}^{r}$, we have  
	$ 	\max\limits_{J \in  \mathcal{I}^{r}}    \big\{\sum_{i \in J} k_{i}\big\} \leq N - \lceil log_{q}|\mathbb{C}| \rceil$ where $r = \big\lfloor \frac{d_{(P,w,\pi)} \mathbb{(C)}-1}{M_w} \big\rfloor $.
\end{proof} 
If $k_i = 1 $ $\forall$  $i \in  [n]$  then  $	\max\limits_{J \in  \mathcal{I}^{r}}  \big\{\sum_{i \in J} k_{i}  \big \}=r$. Thus, we get the Singleton bound for a $(P,w)$-code as:
\begin{corollary}[Singleton bound for $(P,w)$-code]
	Let $\mathbb{C} $  be a $(P,w)$-code of length $n$ over $\mathbb{F}_q$ with minimum distance $d_{(P,w)}(\mathbb{C})$.  Then 
	$  \big\lfloor \frac{d_{(P,w)} \mathbb{(C)}-1}{M_w} \big\rfloor   \leq n - \lceil log_{q}|\mathbb{C}| \rceil $. 
\end{corollary}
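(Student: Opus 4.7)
The plan is to derive this corollary as a direct specialization of Theorem~\ref{sbwpic} (the Singleton bound for $(P,w,\pi)$-codes), using the translation from $(P,w,\pi)$-codes to $(P,w)$-codes established in Remark~\ref{becomes}(i), namely that setting $k_i=1$ for every $i\in[n]$ collapses the block structure and yields $N=n$, with $d_{(P,w,\pi)}$ reducing to $d_{(P,w)}$.

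First, I would take $\mathbb{C}$ to be the given $(P,w)$-code of length $n$ and reinterpret it as a $(P,w,\pi)$-code with the label map $\pi(i)=1$ for all $i$; the minimum distances agree by Remark~\ref{becomes}(i). Applying Theorem~\ref{sbwpic} then yields
\begin{equation*}
\max_{J\in\mathcal{I}^{r}}\Bigl\{\sum_{i\in J}k_i\Bigr\}\leq n-\lceil \log_q|\mathbb{C}|\rceil,
\qquad r=\left\lfloor\frac{d_{(P,w)}(\mathbb{C})-1}{M_w}\right\rfloor.
\end{equation*}
Next, since $k_i=1$ identically, for every ideal $J\in\mathcal{I}^{r}$ we have $\sum_{i\in J}k_i=|J|=r$, so the maximum on the left simplifies to $r$ itself (provided $\mathcal{I}^{r}$ is nonempty, which is clear because ideals of every cardinality $0\leq r\leq n$ exist in any finite poset, as already invoked in the proof of Theorem~\ref{sbwpic}). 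Substituting gives exactly $\lfloor (d_{(P,w)}(\mathbb{C})-1)/M_w\rfloor\leq n-\lceil\log_q|\mathbb{C}|\rceil$, which is the desired inequality.

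There is no real obstacle here beyond bookkeeping: the corollary is a transparent specialization and the only point that deserves a sentence of justification is the observation that $\max_{J\in\mathcal{I}^{r}}\sum_{i\in J}k_i=r$ under the hypothesis $k_i\equiv 1$. I would keep the write-up to two or three sentences, explicitly citing Theorem~\ref{sbwpic} and Remark~\ref{becomes}(i).
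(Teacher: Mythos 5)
Your proposal is correct and follows exactly the route the paper takes: it derives the corollary by setting $k_i=1$ for all $i\in[n]$ in Theorem~\ref{sbwpic} and noting that $\max_{J\in\mathcal{I}^{r}}\sum_{i\in J}k_i=|J|=r$ in that case. The only addition is your (harmless and valid) side remark on the nonemptiness of $\mathcal{I}^{r}$, so no changes are needed.
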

\begin{remark}
 The Singleton bound  for a $(P,\pi)$-linear code $ \mathbb{C} $ of length $N$ takes the form to be $	\max\limits_{J \in  \mathcal{I}^{r}}    \big\{\sum_{i \in J} k_{i}\big\} \leq N - \lceil log_{q}|\mathbb{C}| \rceil $ as described in \cite{bkdnsr} where 	$d_{(P,\pi)}\mathbb{(C)}$ is the minimum distance of $\mathbb{C}$ and $r = d_{(P,\pi)} \mathbb{(C)}-1$.
\end{remark}
\begin{remark}
	If $\mathbb{C} \subseteq \mathbb{F}_q^n$  is a $P$-code of length $n$ over $\mathbb{F}_q$ with minimum distance $d_{P}(\mathbb{C})$, then  $r =d_{P} \mathbb{(C)}-1$ and $\max\limits_{J \in  \mathcal{I}^{r}}    \big\{\sum_{i \in J} k_{i}\big\} = r$ as $M_w=1$. Thus  the Singleton bound  for $P$-code is $d_{P} \mathbb{(C)}-1   \leq n - \lceil log_{q}|\mathbb{C}| \rceil$ as described in \cite{hkmdspc}. 
\end{remark}
\begin{definition} \label{d1}
	A $(P,w,\pi)$-code $\mathbb{C}  $ of length $N$ over $\mathbb{F}_q$ is said to be a maximum distance separable (MDS) $(P,w,\pi)$-code if it attains its Singleton bound.
\end{definition}
	For the case $k_i = k $  $\forall$ $i \in [n]$, we give a necessary condition for a code to be MDS. 
\begin{theorem}
	Let $k_i = k $  $\forall$ $i \in [n]$ and $\mathbb{C} $  be a $(P,w,\pi)$-code of length $N$  over  $\mathbb{F}_q$ with minimum distance $d_{(P,w,\pi)}(\mathbb{C})$.  If $\mathbb{C}$ is MDS, then $
	{M_w}(n - \frac{\lceil log_{q}|\mathbb{C}| \rceil}{k})    + 1  \leq  d_{(P,w,\pi)} \mathbb{(C)} \leq   {M_w}(n - \frac{\lceil log_{q}|\mathbb{C}| \rceil}{k}+1) $. 
\end{theorem}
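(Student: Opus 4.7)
The plan is to combine the Singleton bound of Theorem \ref{sbwpic} with the MDS definition and then use the floor-function relation $r = \big\lfloor \frac{d_{(P,w,\pi)}(\mathbb{C})-1}{M_w} \big\rfloor$ to trap $d_{(P,w,\pi)}(\mathbb{C})$ between two multiples of $M_w$.

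First I would observe that when $k_i = k$ for every $i \in [n]$, the optimization appearing in the Singleton bound simplifies drastically: for every ideal $J \in \mathcal{I}^r$ one has $\sum_{i \in J} k_i = rk$, so $\max_{J \in \mathcal{I}^r}\{\sum_{i \in J}k_i\} = rk$, provided an ideal of size $r$ exists (which is automatic in any finite poset, by successively adjoining minimal elements of the complement). Since $N = nk$, Theorem \ref{sbwpic} becomes the cleaner inequality $rk \leq nk - \lceil \log_q |\mathbb{C}| \rceil$, i.e.\ $r \leq n - \frac{\lceil \log_q |\mathbb{C}| \rceil}{k}$.

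Next, invoking Definition \ref{d1}, the MDS hypothesis is precisely that equality holds in Theorem \ref{sbwpic}, which under the $k_i = k$ simplification reads $rk = nk - \lceil \log_q |\mathbb{C}| \rceil$. Solving gives the exact value
\begin{equation*}
r \;=\; n - \frac{\lceil \log_q |\mathbb{C}| \rceil}{k}.
\end{equation*}

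Finally I would unfold the definition $r = \big\lfloor \frac{d_{(P,w,\pi)}(\mathbb{C})-1}{M_w}\big\rfloor$. By the elementary characterization of the floor, this is equivalent to the sandwich $rM_w \leq d_{(P,w,\pi)}(\mathbb{C}) - 1 < (r+1)M_w$, i.e.\ $rM_w + 1 \leq d_{(P,w,\pi)}(\mathbb{C}) \leq (r+1)M_w$. Substituting the exact value of $r$ obtained above yields precisely
\begin{equation*}
M_w\!\left(n - \tfrac{\lceil \log_q |\mathbb{C}| \rceil}{k}\right) + 1 \;\leq\; d_{(P,w,\pi)}(\mathbb{C}) \;\leq\; M_w\!\left(n - \tfrac{\lceil \log_q |\mathbb{C}| \rceil}{k} + 1\right),
\end{equation*}
which is the desired two-sided bound. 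There is no genuine obstacle here: the whole argument is algebraic manipulation once one notices that the uniform block length collapses the max-over-ideals to $rk$; the only subtlety worth flagging is the strict inequality hidden in the upper piece of the floor characterization, which is why the upper bound is a non-strict $(r+1)M_w$ rather than $(r+1)M_w - 1$.
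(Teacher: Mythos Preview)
Your proof is correct and follows essentially the same approach as the paper: use the MDS equality in the Singleton bound together with $k_i=k$ to obtain $rk = nk - \lceil \log_q|\mathbb{C}|\rceil$, then unfold the floor definition of $r$ to sandwich $d_{(P,w,\pi)}(\mathbb{C})$. Your write-up is in fact slightly more careful than the paper's, since you explicitly note why $\max_{J\in\mathcal{I}^r}\sum_{i\in J}k_i$ collapses to $rk$ and why the strict upper inequality from the floor yields a non-strict bound after passing to integers.
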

\begin{proof}
	If $\mathbb{C}$ is  MDS, there exist an ideal $J$ in $\mathcal{I}^{r}$ such that  $
	\sum\limits_{i \in J} k_{i}  = N - \lceil log_{q}|\mathbb{C}| \rceil $. Since
	 $k_i =k $  $\forall$ $i \in [n]$, so $	r k  = n k - \lceil log_{q}|\mathbb{C}| \rceil $. As
	$r = \big\lfloor \frac{d_{(P,w,\pi)} \mathbb{(C)}-1}{M_w} \big\rfloor $, we have 
	$	n - \frac{\lceil log_{q}|\mathbb{C}| \rceil}{k} \leq  \frac{d_{(P,w,\pi)} \mathbb{(C)}-1}{M_w} < n - \frac{\lceil log_{q}|\mathbb{C}| \rceil}{k} +1 $. Hence, $
	{M_w}(n - \frac{\lceil log_{q}|\mathbb{C}| \rceil}{k})    + 1 \leq  d_{(P,w,\pi)} \mathbb{(C)} \leq   {M_w}(n - \frac{\lceil log_{q}|\mathbb{C}| \rceil}{k}+1)$.		
\end{proof}
    Thus, if $k_i = k $  $\forall$ $i \in [n]$ and  $\mathbb{C} $ is a $(P,w,\pi)$-code of length $N$ over $\mathbb{F}_q$ with minimum distance $d_{(P,w,\pi)}(\mathbb{C})$,  then $\mathbb{C}$ cannot be an  MDS  poset code whenever
    $	1 \leq d_{(P,w,\pi)} \mathbb{(C)} \leq 	{M_w}(n - \frac{\lceil log_{q}|\mathbb{C}| \rceil}{k})  $ or $ d_{(P,w,\pi)} (\mathbb{C})  > {M_w}(n - \frac{\lceil log_{q}|\mathbb{C}| \rceil}{k}+1) $.
  \par Now we will compare the maximum distance separability of codes with different poset metric structures. Let $d_{(P,w,\pi)}(\mathbb{C})$  and  $d_{(P,\pi)}(\mathbb{C})$ be the minimum distances of a code $\mathbb{C} $ of length $N = k_1 + k_2 + \ldots + k_n$ over $\mathbb{F}_q$, with respect to $(P,w,\pi)$-metric and $(P,\pi)$-metric respectively. Clearly, $ d_{(P,w,\pi)} \mathbb{(C)} \leq M_w d_{(P,\pi)} \mathbb{(C)}$. Thus, we have the following:
\begin{proposition} \label{phg}
	Let $d_{(P,w,\pi)}(\mathbb{C})$  and  $d_{(P,\pi)}(\mathbb{C})$ be the minimum distances of a code $\mathbb{C} $ of length $N = k_1 + k_2 + \ldots + k_n$ over $\mathbb{F}_q$, with respect to $(P,w,\pi)$-metric and $(P,\pi)$-metric respectively.  Then $	\big\lfloor \frac{d_{(P,w,\pi)} \mathbb{(C)}-1}{ M_w}\big\rfloor \leq d_{(P,\pi)}(\mathbb{C}) - 1$.	
\end{proposition}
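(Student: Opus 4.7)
The plan is to use the inequality $d_{(P,w,\pi)}(\mathbb{C}) \leq M_w \, d_{(P,\pi)}(\mathbb{C})$, which is already noted in the paragraph just preceding the statement. First I would justify this inequality explicitly by establishing the pointwise bound $w_{(P,w,\pi)}(x) \leq M_w \, w_{(P,\pi)}(x)$ for every $x \in \mathbb{F}_q^N$. This falls straight out of the definitions: in
\[
w_{(P,w,\pi)}(x) \;=\; \sum_{i \in M_{x}^{P,\pi}} \tilde{w}^{k_i}(x_i) \;+\; \sum_{i \in I_{x}^{P,\pi} \setminus M_{x}^{P,\pi}} M_w,
\]
each summand $\tilde{w}^{k_i}(x_i)$ in the first sum is bounded by $M_w$ by definition of $M_w$, while the entries of the second sum equal $M_w$; so the whole expression is at most $M_w \cdot |I_{x}^{P,\pi}| = M_w \, w_{(P,\pi)}(x)$. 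Applying this to $x = c_1 - c_2$ for any two distinct codewords and then taking the minimum over such pairs yields $d_{(P,w,\pi)}(\mathbb{C}) \leq M_w \, d_{(P,\pi)}(\mathbb{C})$.

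The second step is a short numerical manipulation. Since $M_w \geq 1$, the above inequality gives
\[
\frac{d_{(P,w,\pi)}(\mathbb{C}) - 1}{M_w} \;\leq\; d_{(P,\pi)}(\mathbb{C}) - \frac{1}{M_w} \;<\; d_{(P,\pi)}(\mathbb{C}).
\]
Because $d_{(P,\pi)}(\mathbb{C})$ is a positive integer, the floor function then forces $\big\lfloor (d_{(P,w,\pi)}(\mathbb{C}) - 1)/M_w \big\rfloor \leq d_{(P,\pi)}(\mathbb{C}) - 1$, which is the desired conclusion.

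There is no real obstacle here; the content of the proposition is essentially a repackaging of the comparison $d_{(P,w,\pi)} \leq M_w \, d_{(P,\pi)}$ into the form in which it is used when invoking the Singleton bound of Theorem \ref{sbwpic}. The only point worth stating carefully is the strictness of the inequality after dividing by $M_w$, which is what lets the floor drop to $d_{(P,\pi)}(\mathbb{C}) - 1$ rather than merely $d_{(P,\pi)}(\mathbb{C})$.
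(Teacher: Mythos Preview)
Your proof is correct and follows exactly the approach the paper indicates: the paper simply states $d_{(P,w,\pi)}(\mathbb{C}) \leq M_w\, d_{(P,\pi)}(\mathbb{C})$ in the sentence preceding the proposition and leaves the rest implicit, so your argument is a faithful (and more detailed) unpacking of that one-line justification.
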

\begin{proposition} \label{pwtoP singleton}
	Let $d_{(P,w)}(\mathbb{C})$  and  $d_{P}(\mathbb{C})$ be the minimum distances of a code $\mathbb{C} $ of length $n$ over $\mathbb{F}_q$, with respect to $(P,w)$-metric and $P$-metric respectively.  Then $	\big\lfloor \frac{d_{(P,w)} \mathbb{(C)}-1}{ M_w}\big\rfloor \leq d_{P}(\mathbb{C}) - 1$.	
\end{proposition}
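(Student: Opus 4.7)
The plan is to mirror the argument of Proposition \ref{phg}, since the statement is the exact $\pi$-trivial analogue (taking $k_i=1$ for all $i\in[n]$). In fact, one could simply invoke Proposition \ref{phg} with the label map $\pi(i)=1$ for every $i$, because then by Remark \ref{becomes}(i) the $(P,w,\pi)$-metric collapses to the $(P,w)$-metric and (with $w$ the Hamming weight playing no role here, but rather with the same block trivialization) the $(P,\pi)$-metric becomes the $P$-metric. However, for a self-contained argument I would instead establish the underlying weight comparison directly and then transport it to minimum distances.

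The key step is the pointwise inequality
\begin{equation*}
	w_{(P,w)}(v) \leq M_w \cdot w_P(v) \qquad \text{for every } v \in \mathbb{F}_q^n.
\end{equation*}
This follows immediately from the definition
\begin{equation*}
	w_{(P,w)}(v) = \sum_{i \in M_v^P} w(v_i) + \sum_{i \in I_v^P \setminus M_v^P} M_w,
\end{equation*}
by bounding each $w(v_i) \leq M_w$ in the first sum, which yields $w_{(P,w)}(v) \leq M_w |I_v^P| = M_w w_P(v)$. Applying this to codeword differences $c_1-c_2$ and minimizing over distinct pairs of codewords of $\mathbb{C}$ gives $d_{(P,w)}(\mathbb{C}) \leq M_w \, d_P(\mathbb{C})$.

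From this translation-invariant comparison I would finish by a short integer-arithmetic step: the inequality $d_{(P,w)}(\mathbb{C})-1 \leq M_w \, d_P(\mathbb{C}) - 1 < M_w \, d_P(\mathbb{C})$ yields $\frac{d_{(P,w)}(\mathbb{C})-1}{M_w} < d_P(\mathbb{C})$, and since $d_P(\mathbb{C})$ is an integer, the floor on the left is at most $d_P(\mathbb{C})-1$. There is no real obstacle here; the only item worth double-checking is that the bound $w(v_i) \leq M_w$ is allowed to be strict at some $i \in M_v^P$ (which is precisely what makes the two metrics genuinely different) and that the resulting strict inequality survives after subtracting $1$ and dividing by $M_w$, which is why the floor function enters in exactly this way.
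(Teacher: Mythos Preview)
Your proposal is correct and follows essentially the same route as the paper. The paper does not give a separate proof of this proposition; it states right before Proposition~\ref{phg} that ``Clearly, $d_{(P,w,\pi)}(\mathbb{C}) \leq M_w\, d_{(P,\pi)}(\mathbb{C})$'' and then records both Propositions~\ref{phg} and~\ref{pwtoP singleton} as immediate consequences, the latter being the $k_i=1$ specialization you identify. Your explicit pointwise bound $w_{(P,w)}(v)\le M_w\, w_P(v)$ and the floor argument are exactly what underlies the paper's one-line justification.
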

  We answer affirmatively the question whether a code which is MDS in a weighted  poset (block) metric is also MDS in  poset (block) metric in the following successive results. Recall that,  the   Singleton bound \cite{bkdnsr} of any  $(P,\pi)$-code $\mathbb{C} $  is $ \max\limits_{J \in  \mathcal{I}^{d_{(P,\pi)}(\mathbb{C}) - 1}}    \big\{\sum_{i \in J} k_{i}\big\} \leq  N - \lceil log_{q}|\mathbb{C}| \rceil $.
\begin{theorem}
	If $\mathbb{C} $  is an MDS code with respect to $(P,w,\pi)$-metric then $\mathbb{C} $  is  MDS with respect to $(P,\pi)$-metric.
	\begin{proof}
		\sloppy{If $\mathbb{C} $  is  MDS  with respect to $(P,w,\pi)$-metric then $ \max\limits_{J \in  \mathcal{I}^{r}}    \big\{\sum_{i \in J} k_{i}\big\} = N - \lceil log_{q}|\mathbb{C}| \rceil $ where $r = \big\lfloor \frac{d_{(P,w,\pi)} \mathbb{(C)}-1}{M_w} \big\rfloor$. As $ r \leq d_{(P,\pi)}(\mathbb{C}) - 1 $ by  Proposition \ref{phg}, we have 
			$ \max\limits_{J \in  \mathcal{I}^{r}}    \big\{\sum_{i \in J} k_{i}\big\} \leq \max\limits_{J \in   \mathcal{I}^{d_{(P,\pi)}(\mathbb{C}) - 1}}    \big\{\sum_{i \in J} k_{i}\big\}$.  	Thus,  $  N - \lceil log_{q}|\mathbb{C}| \rceil \leq  \max\limits_{J \in   \mathcal{I}^{d_{(P,\pi)}(\mathbb{C}) - 1}}    \big\{  \sum_{i \in J} k_{i}\big\}$.
			Hence $\mathbb{C} $  is MDS with respect to $(P,\pi)$-metric.}
	\end{proof}
\end{theorem}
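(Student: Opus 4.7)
The plan is to chain the Singleton bound for $(P,w,\pi)$-codes (Theorem~\ref{sbwpic}) with the Singleton bound for $(P,\pi)$-codes recalled in the remark preceding this theorem, linking the two by the minimum-distance comparison of Proposition~\ref{phg}. Set $r = \bigl\lfloor (d_{(P,w,\pi)}(\mathbb{C})-1)/M_w\bigr\rfloor$ and $s = d_{(P,\pi)}(\mathbb{C})-1$; then $r \leq s$ by Proposition~\ref{phg}. The MDS hypothesis together with Definition~\ref{d1} and Theorem~\ref{sbwpic} gives
\[
\max_{J \in \mathcal{I}^{r}} \Bigl\{\sum_{i \in J} k_i\Bigr\} \; = \; N - \lceil \log_q |\mathbb{C}|\rceil,
\]
so the task reduces to promoting this equality into one with $s$ in place of $r$.

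The key auxiliary fact I intend to use is the monotonicity
\[
\max_{J \in \mathcal{I}^{r}} \Bigl\{\sum_{i \in J} k_i\Bigr\} \; \leq \; \max_{J' \in \mathcal{I}^{s}} \Bigl\{\sum_{i \in J'} k_i\Bigr\} \qquad (r \leq s \leq n),
\]
which holds because any ideal $J \in \mathcal{I}^{r}$ can be enlarged to some $J' \in \mathcal{I}^{s}$ by iteratively adjoining a minimal element of $P \setminus J$ at each stage (keeping the set downward-closed); since each $k_i \in \mathbb{N}$, the sum of labels only grows under such enlargement. Chaining this monotonicity with the $(P,\pi)$-Singleton bound then yields
\[
N - \lceil \log_q |\mathbb{C}|\rceil \; = \; \max_{J \in \mathcal{I}^{r}}\Bigl\{\sum_{i \in J} k_i\Bigr\} \; \leq \; \max_{J' \in \mathcal{I}^{s}}\Bigl\{\sum_{i \in J'} k_i\Bigr\} \; \leq \; N - \lceil \log_q |\mathbb{C}|\rceil,
\]
forcing equality on the right-hand inequality and hence the MDS property of $\mathbb{C}$ with respect to the $(P,\pi)$-metric.

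Overall the argument is nothing but a chain of inequalities, and the only ingredient that is not completely immediate is the monotonicity of $\mathcal{I}^{t} \ni J \mapsto \max \sum_{i \in J} k_i$ in the cardinality parameter $t$, which I expect to be the main (and modest) obstacle. It rests on the structural fact that a finite poset $P$ carries a saturated chain of ideals $\emptyset \subsetneq I_1 \subsetneq I_2 \subsetneq \cdots \subsetneq I_n = P$ with $|I_t|=t$, obtained by successively adjoining minimal elements of the complement. Once this is noted, the rest of the proof is bookkeeping.
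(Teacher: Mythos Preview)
Your proposal is correct and follows essentially the same route as the paper's proof: use the MDS equality at level $r$, invoke Proposition~\ref{phg} to get $r \leq s = d_{(P,\pi)}(\mathbb{C})-1$, then pass from $\mathcal{I}^{r}$ to $\mathcal{I}^{s}$ via the monotonicity of $\max_{J}\sum_{i\in J}k_i$ and close with the $(P,\pi)$-Singleton bound. If anything, you are slightly more explicit than the paper in justifying the monotonicity step (by enlarging ideals through minimal elements of the complement) and in spelling out the sandwich that forces equality.
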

\begin{theorem}
	If $\mathbb{C} $  is an MDS code with respect to $(P,w)$-metric then $\mathbb{C} $  is MDS  with respect to $P$-metric.
	\begin{proof}
		If $\mathbb{C} $  is  MDS  with respect to $(P,w)$-metric then $  \big\lfloor \frac{d_{(P,w)} \mathbb{(C)}-1}{M_w} \big\rfloor   = n - \lceil log_{q}|\mathbb{C}| \rceil $. By  Proposition \ref{phg}, by considering $k_i = 1 $ for every $i \in [n]$, we have $	\big\lfloor \frac{d_{(P,w)} \mathbb{(C)}-1}{ M_w}\big\rfloor \leq d_{P}(\mathbb{C}) - 1$.  Thus,  $  n - \lceil log_{q}|\mathbb{C}| \rceil \leq d_{P}(\mathbb{C}) - 1$.		Hence $\mathbb{C} $  is MDS code   with respect to $P$-metric.
	\end{proof}
\end{theorem}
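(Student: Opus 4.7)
The plan is to mirror, almost verbatim, the argument used for the preceding transfer theorem (MDS with respect to $(P,w,\pi)$-metric $\Rightarrow$ MDS with respect to $(P,\pi)$-metric), but using the comparison Proposition \ref{pwtoP singleton} in place of Proposition \ref{phg}. Since the $(P,w)$-metric is just the $(P,w,\pi)$-metric under the specialization $k_i = 1$ for every $i \in [n]$, the Singleton bound for a $(P,w)$-code reduces to $\bigl\lfloor \tfrac{d_{(P,w)}(\mathbb{C}) - 1}{M_w} \bigr\rfloor \le n - \lceil \log_q |\mathbb{C}| \rceil$, and MDS-ness means equality holds.

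First, I would unpack the MDS hypothesis: by Definition \ref{d1} applied to the specialized Singleton bound, the assumption that $\mathbb{C}$ is MDS with respect to the $(P,w)$-metric gives
\begin{equation*}
	\Bigl\lfloor \tfrac{d_{(P,w)}(\mathbb{C}) - 1}{M_w} \Bigr\rfloor = n - \lceil \log_q |\mathbb{C}| \rceil.
\end{equation*}
Next, I would invoke Proposition \ref{pwtoP singleton}, which asserts $\bigl\lfloor \tfrac{d_{(P,w)}(\mathbb{C}) - 1}{M_w} \bigr\rfloor \le d_P(\mathbb{C}) - 1$. Substituting the equality above into this inequality immediately yields $n - \lceil \log_q |\mathbb{C}| \rceil \le d_P(\mathbb{C}) - 1$.

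Finally, the Singleton bound for a $P$-code (recorded as a remark after the $(P,w)$-bound) forces the reverse inequality $d_P(\mathbb{C}) - 1 \le n - \lceil \log_q |\mathbb{C}| \rceil$. Sandwiching gives equality, so $\mathbb{C}$ attains the Singleton bound in the $P$-metric, i.e., $\mathbb{C}$ is MDS with respect to the $P$-metric.

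The argument is essentially a three-line chase of inequalities; there is no real obstacle, since all the hard content is packaged into the comparison Proposition \ref{pwtoP singleton} and the two Singleton bounds. The only subtlety to flag is that one must check Proposition \ref{pwtoP singleton} genuinely applies in this specialization (it does, since it is stated for codes of length $n$ with no block structure), so no extra hypothesis on the label map $\pi$ is being smuggled in.
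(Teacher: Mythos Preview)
Your proof is correct and essentially identical to the paper's: both start from the MDS equality $\bigl\lfloor \tfrac{d_{(P,w)}(\mathbb{C})-1}{M_w} \bigr\rfloor = n - \lceil \log_q |\mathbb{C}| \rceil$, combine it with the comparison inequality $\bigl\lfloor \tfrac{d_{(P,w)}(\mathbb{C})-1}{M_w} \bigr\rfloor \le d_P(\mathbb{C}) - 1$, and conclude that the $P$-metric Singleton bound is attained. The only cosmetic difference is that you cite Proposition~\ref{pwtoP singleton} directly and make the final sandwiching with the $P$-code Singleton bound explicit, whereas the paper cites Proposition~\ref{phg} specialized to $k_i = 1$ (which is the same statement) and leaves the sandwiching implicit.
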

\section{Codes on NRT Block Spaces}
Throughout this section, we  consider $P=([n],\preceq)$ to be a chain. Then each ideal in $P$ has a unique maximal element.  Let $v \in \mathbb{F}_{q}^N$, $I_v^{P,\pi}= \langle supp_{\pi}(v) \rangle$ and $j$ be the maximal element of  $I_v^{P,\pi}$. Then,
\begin{equation*}
	w_{(P,w,\pi)}(v)=  \tilde{w}^{k_j}{(v_j)} + (|I_v^{P,\pi}|-1) M_w
\end{equation*} 
where $v_j \in \mathbb{F}_{q}^{k_j}$ and $\tilde{w}^{k_j}{(v_j)}=\max\{w(v_{j_t}) : 1 \leq t \leq k_j\}$.
The  space $ (\mathbb{F}_{q}^N, ~d_{(P,w,\pi)} )$ is called as the  NRT block space (when $P$ is a chain).
\par Let $A_r = \{x= x_{1} \oplus x_{2} \oplus \ldots \oplus x_{n} \in \mathbb{F}_{q}^{N}  : w_{(P,w,\pi)}(x)=r  \}$.  Now, for any $i \in [n]$,  $Max \langle i \rangle = \{i\}$, $| \langle i \rangle | = i$ and  $|\mathcal{I}_{1}^{i}|=1$. Then, 
\begin{enumerate}[label={(\roman*)}]
	\item   For $1 \leq r \leq M_{w} $, we have $v \in A_r$ iff $| \langle supp_{\pi} (v) \rangle | = 1 $ (as $supp_{\pi}(v)=\{1\}$); and hence $	|A_{r}| = |D_{r}^{k_1}|$.
	\item	For $r= tM_{w}+ k $, where $0 < k \leq M_{w}$ and $1 \leq t \leq n-1$, we have $v \in A_r$ iff $| \langle supp_{\pi} (v) \rangle | = t + 1 $. Thus,
	$|A_{tM_{w} +  k }| =  q^{ k_{1} + k_2 + \cdots + k_t} |D_{k}^{k_{t+1}}|$.
\end{enumerate} 
The Singleton bound for  NRT block codes is obtained from Theorem \ref{sbwpic}:
\begin{theorem}\label{chainSingl}
	Let $P$ be a chain and $\mathbb{C} $  be a $(P,w,\pi)$-code of length $N = k_1 + k_2 + \ldots + k_n$ over $\mathbb{F}_q$ with minimum distance $d_{(P,w,\pi)}(\mathbb{C})$. Then $
	\sum\limits_{i \in J} k_{i} \leq N - \lceil log_{q}|\mathbb{C}| \rceil$ for any  ideal $J$ with $|J| = \big\lfloor \frac{d_{(P,w,\pi)} \mathbb{(C)}-1}{M_w} \big\rfloor $.
\end{theorem}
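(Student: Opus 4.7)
The plan is to derive Theorem~\ref{chainSingl} as an immediate specialization of Theorem~\ref{sbwpic}. Since we already have the Singleton bound for arbitrary posets, the only real work is to understand what the maximum $\max_{J \in \mathcal{I}^r}\{\sum_{i \in J} k_i\}$ looks like when $P$ is a chain.

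First, I would recall that for the chain $1 \preceq 2 \preceq \cdots \preceq n$, the only ideals are the principal ideals $\langle i \rangle = \{1, 2, \ldots, i\}$ for $0 \leq i \leq n$, because any ideal is downward-closed and any downward-closed subset of a totally ordered set is an initial segment. In particular, for each integer $r$ with $0 \leq r \leq n$, there is a \emph{unique} ideal of cardinality $r$, namely $\{1, 2, \ldots, r\}$, so $|\mathcal{I}^r| = 1$. This means the maximum in the bound of Theorem~\ref{sbwpic} is taken over a single element, and hence
\[
\max_{J \in \mathcal{I}^r} \Bigl\{ \sum_{i \in J} k_i \Bigr\} = \sum_{i \in J} k_i
\]
for the unique ideal $J$ of cardinality $r$.

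Next, I would set $r = \bigl\lfloor \frac{d_{(P,w,\pi)}(\mathbb{C})-1}{M_w} \bigr\rfloor$ and apply Theorem~\ref{sbwpic} directly: the general theorem tells us that $\max_{J \in \mathcal{I}^r}\{\sum_{i \in J} k_i\} \leq N - \lceil \log_q |\mathbb{C}| \rceil$, and by the previous paragraph this maximum equals $\sum_{i \in J} k_i$ for the sole ideal $J$ of cardinality $r$. The phrasing of the theorem ("for any ideal $J$ with $|J| = r$") is therefore vacuously precise, since there is only one such $J$.

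There is essentially no obstacle here beyond the combinatorial observation about ideals of chains; no new weight-theoretic or code-theoretic reasoning is required. The result is really a corollary of Theorem~\ref{sbwpic}, repackaged for the NRT setting where the chain structure collapses the ideal lattice into a linear chain indexed by cardinality.
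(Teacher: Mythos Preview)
Your proposal is correct and matches the paper's approach exactly: the paper presents Theorem~\ref{chainSingl} without a separate proof, introducing it with the sentence ``The Singleton bound for NRT block codes is obtained from Theorem~\ref{sbwpic},'' so it is indeed treated as an immediate specialization. Your observation that a chain has a unique ideal of each cardinality (so the maximum collapses) is precisely the implicit reasoning behind this reduction.
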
 
\begin{corollary}
	If  $k_i=s$ $\forall$ $i \in [n] $ then
	$ \big\lfloor \frac{d_{(P,w,\pi)} \mathbb{(C)}-1}{M_w} \big\rfloor  \leq n - \frac{ \lceil log_{q}|\mathbb{C}| \rceil}{s}$. Moreover, if  $\mathbb{C}$ is a linear $[N, k]$ code   then $ \big\lfloor \frac{d_{(P,w,\pi)} \mathbb{(C)}-1}{M_w} \big\rfloor  \leq  \big\lfloor n - \frac{k}{s}  \big\rfloor $.
\end{corollary}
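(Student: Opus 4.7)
The plan is to deduce both inequalities as immediate consequences of Theorem \ref{chainSingl} by exploiting the uniformity $k_i=s$. Let $r = \bigl\lfloor \frac{d_{(P,w,\pi)}(\mathbb{C})-1}{M_w}\bigr\rfloor$. Since $P=([n],\preceq)$ is a chain, its ideals are totally ordered by inclusion and an ideal of cardinality $r$ certainly exists (for $r\le n$; the case $r>n$ is vacuous as $w_{(P,w,\pi)}\le nM_w$). Pick any such $J\in\mathcal{I}^{r}$ and apply Theorem \ref{chainSingl}.

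For the first inequality, the hypothesis $k_i=s$ for every $i\in[n]$ gives $\sum_{i\in J}k_i = s\,|J| = sr$ and $N=ns$. Theorem \ref{chainSingl} then reads $sr \le ns - \lceil\log_q|\mathbb{C}|\rceil$, and dividing by the positive integer $s$ yields
\begin{equation*}
\Bigl\lfloor \frac{d_{(P,w,\pi)}(\mathbb{C})-1}{M_w}\Bigr\rfloor \le n - \frac{\lceil\log_q|\mathbb{C}|\rceil}{s}.
\end{equation*}

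For the linear case, if $\mathbb{C}$ is a linear $[N,k]$ code over $\mathbb{F}_q$ then $|\mathbb{C}|=q^k$, so $\lceil\log_q|\mathbb{C}|\rceil = k$. The previous bound becomes $r \le n - \tfrac{k}{s}$. The left side is an integer by construction, so the inequality persists after replacing the right side by its floor, giving $r \le \bigl\lfloor n - \tfrac{k}{s}\bigr\rfloor$, which is the desired statement.

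There is essentially no obstacle: the proof is a one-line specialization of Theorem \ref{chainSingl} combined with the observation that taking the floor of the right-hand side is legitimate whenever the left-hand side is already integer-valued. The only minor point worth checking is that an ideal of cardinality $r$ is available in $P$, which is automatic since the chain structure guarantees ideals of every cardinality $0,1,\ldots,n$, and $r\le n$ whenever the bound is not vacuous.
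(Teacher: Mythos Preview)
Your proposal is correct and is exactly the intended specialization of Theorem~\ref{chainSingl}; the paper states the corollary without proof, and the argument you give---substituting $k_i=s$, $N=ns$ into the bound, dividing by $s$, and then using $|\mathbb{C}|=q^k$ together with the integrality of the left-hand side---is precisely the one-line deduction implicit in its placement.
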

Let  $r = t M_w+s$ where $s \in  \{1,2,\ldots, M_w\}$ and $t \geq 0$. Let $B_{(P,w,\pi)} (x,~r)$ and $B_{(P,\pi)} (x,~r)$ denote the  $r$-balls centered at $x$ with radius $r$ with respect to the $(P,w,\pi)$-metric and $(P,\pi)$-metric respectively. If $y \in B_{(P,w,\pi)} (x,~r)$ then $w_{(P,w,\pi)}(x-y) \leq tM_w+s $ and  $w_{(P,\pi)}(x-y) = |\langle supp_{\pi} (x-y) \rangle| \leq t+1$. Hence $y \in B_{(P,\pi)} (x,~t+1)$. Thus, we have
\begin{theorem}\label{Packing radius pwpi code lemma}
	Let $ x \in \mathbb{F}_{q}^{N} $ and $w$ be a weight on $\mathbb{F}_{q}$. If $r = t M_w + s$, where $s \in  \{1,2,\ldots, M_w\}$ and $t \geq 0$,   then $B_{(P,w,\pi)} (x,~r) \subseteq B_{(P,\pi)} (x,~t+1)$. Furthermore, $B_{(P,w,\pi)} (x,~r) = B_{(P,\pi)} (x,~t+1)$ if and only if $r$ is a multiple of $M_w$.  
\end{theorem}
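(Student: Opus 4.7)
The plan is to work directly from the chain-case weight formula $w_{(P,w,\pi)}(v) = \tilde{w}^{k_j}(v_j) + (|I_v^{P,\pi}|-1) M_w$ established in the opening of Section~5, together with the fact that for $v \neq 0$ the ``top-block'' weight $\tilde{w}^{k_j}(v_j)$ lies in $\{1, 2, \ldots, M_w\}$. Since both metrics are translation-invariant, it suffices to reason about $v = x - y$ for $y$ in each ball, and the zero vector is handled trivially.

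For the inclusion $B_{(P,w,\pi)}(x,r) \subseteq B_{(P,\pi)}(x,t+1)$, I would take $y \in B_{(P,w,\pi)}(x,r)$ with $v = x - y \neq 0$, write $m = |I_v^{P,\pi}|$, and use the formula to get $1 + (m-1) M_w \leq \tilde{w}^{k_j}(v_j) + (m-1)M_w = w_{(P,w,\pi)}(v) \leq tM_w + s$. Since $s \leq M_w$, this rearranges to $(m-1)M_w < (t+1) M_w$, hence $m \leq t+1$. But $w_{(P,\pi)}(v) = m$, giving $y \in B_{(P,\pi)}(x,t+1)$.

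For the equality assertion, I would split into the two directions dictated by $r = tM_w + s$. If $s = M_w$ (so $r = (t+1)M_w$ is a multiple of $M_w$), then for any $y \in B_{(P,\pi)}(x,t+1)$ with $v \neq 0$, the upper bound $\tilde{w}^{k_j}(v_j) \leq M_w$ gives $w_{(P,w,\pi)}(v) \leq M_w + (m-1) M_w = m M_w \leq (t+1) M_w = r$, yielding the reverse inclusion. Conversely, if $s \in \{1, \ldots, M_w - 1\}$, I would construct a witness that separates the balls. Provided $t+1 \leq n$ (the non-degenerate range; otherwise both balls already equal $\mathbb{F}_q^N$ and the statement is vacuous), pick an element $\alpha \in \mathbb{F}_q$ with $w(\alpha) = M_w$, and define $v$ so that $v_i = 0$ for $i \neq t+1$ and $v_{t+1} = (\alpha, 0, \ldots, 0) \in \mathbb{F}_q^{k_{t+1}}$; then $I_v^{P,\pi} = \{1, \ldots, t+1\}$, so $w_{(P,\pi)}(v) = t+1$ while $w_{(P,w,\pi)}(v) = M_w + tM_w = (t+1)M_w > r$.

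The main obstacle is the necessity half of the equivalence, where one must exhibit an element of $B_{(P,\pi)}(x,t+1) \setminus B_{(P,w,\pi)}(x,r)$ using only what is intrinsic to the chain structure and the existence of a maximum-weight scalar $\alpha \in \mathbb{F}_q$; the $(P,w,\pi)$-weight of the constructed $v$ has to land exactly at $(t+1)M_w$, which requires loading all the $w$-weight into a single maximal block rather than spreading it. The edge case $t+1 > n$ also deserves a line of comment, since there both balls collapse to $\mathbb{F}_q^N$ and the ``only if'' part of the equivalence is understood in the strict (non-trivial) sense $r < nM_w$.
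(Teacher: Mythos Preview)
Your argument is correct and follows the same line as the paper: the paper derives the inclusion $B_{(P,w,\pi)}(x,r)\subseteq B_{(P,\pi)}(x,t+1)$ in the paragraph immediately preceding the theorem by exactly the observation $w_{(P,w,\pi)}(x-y)\le tM_w+s \Rightarrow |\langle supp_\pi(x-y)\rangle|\le t+1$, and then states the theorem without further proof of the ``Furthermore'' clause. Your treatment of the equality claim (both the reverse inclusion when $s=M_w$ and the explicit witness $v$ with $v_{t+1}=(\alpha,0,\dots,0)$, $w(\alpha)=M_w$, when $s<M_w$) supplies precisely what the paper omits, and your remark that the only-if direction is meant in the nontrivial range $t+1\le n$ (equivalently $r\le nM_w$) is the right caveat.
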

The packing radius of a  code $\mathbb{C} $ with respect to any metric $d$ is the
greatest integer $r$ such that the  $r$-balls centered at any two distinct codewords are disjoint. 
\par  Let $R_{(P,\pi)} (\mathbb{C}) = t$ be the packing radius of the code $\mathbb{C} \subseteq \mathbb{F}_{q}^{N} $ and $d_{(P,\pi)} (\mathbb{C})$ be the minimum distance of the code $\mathbb{C}$ with respect to the  $(P,\pi)$-metric.  
Let $R_s =  (t-1) M_w + s$. Then $B_{(P,w,\pi)} (x,~R_s) \subseteq B_{(P,\pi)} (x,~t)$ for each positive integer $ s \leq M_w$ by Theorem \ref{Packing radius pwpi code lemma}. Hence, $R_{(P,w,\pi)} (\mathbb{C}) \geq R_s $ for every $0 < s \leq M_w$.  If $s = M_w$, then  $ R_{(P,w,\pi)} (\mathbb{C}) = R_{M_w} = t M_w $ (again, due to Theorem \ref{Packing radius pwpi code lemma}). Thus, packing radius of a $(P,w,\pi)$-block code $ \mathbb{C}$ is $R_{(P,w,\pi)} (\mathbb{C}) = M_w R_{(P,\pi)} (\mathbb{C}) $. Now, $R_{(P,\pi)} (\mathbb{C}) = d_{(P,\pi)} (\mathbb{C}) -1 $ when $P$ is a chain, (ref.  \cite{nrt block}, Theorem 5). Thus, 
\begin{corollary}[Packing radius]
	The packing radius of a  $(P,w,\pi)$-block code $ \mathbb{C} \subseteq  \mathbb{F}_{q}^{N} $ is $R_{(P,w,\pi)} (\mathbb{C}) = M_w ( d_{(P,\pi)} (\mathbb{C}) - 1 ) $.
\end{corollary}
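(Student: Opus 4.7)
The plan is to establish the packing radius identity in two stages: first prove the multiplicative relation $R_{(P,w,\pi)}(\mathbb{C}) = M_w\cdot R_{(P,\pi)}(\mathbb{C})$, and then invoke the known chain-case formula $R_{(P,\pi)}(\mathbb{C}) = d_{(P,\pi)}(\mathbb{C})-1$ (from Theorem~5 of \cite{nrt block}) to finish.

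For the first stage, I would set $t = R_{(P,\pi)}(\mathbb{C})$ and use Theorem~\ref{Packing radius pwpi code lemma} as the bridge between the two ball systems. Writing any integer $r$ uniquely as $r = (t-1)M_w + s$ with $1 \le s \le M_w$, that theorem gives the inclusion $B_{(P,w,\pi)}(x,r) \subseteq B_{(P,\pi)}(x,t)$. Since the $(P,\pi)$-balls of radius $t$ centered at distinct codewords of $\mathbb{C}$ are pairwise disjoint by definition of $t$, the smaller $(P,w,\pi)$-balls inherit this disjointness. Letting $s$ run through $\{1,\ldots,M_w\}$ and taking the maximum $s = M_w$ gives the lower bound $R_{(P,w,\pi)}(\mathbb{C}) \ge tM_w$.

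For the matching upper bound, I would exploit the ``if and only if'' clause of Theorem~\ref{Packing radius pwpi code lemma}: equality $B_{(P,w,\pi)}(x,r) = B_{(P,\pi)}(x,t+1)$ holds exactly when $r$ is a multiple of $M_w$. Applied at $r = tM_w$, this identifies the $(P,w,\pi)$-ball of radius $tM_w$ with the $(P,\pi)$-ball of radius $t$. Since enlarging to radius $tM_w + 1$ would correspond, via the same inclusion analysis, to a $(P,\pi)$-ball of radius $t+1$, and the latter balls centered at distinct codewords must intersect (because $t = R_{(P,\pi)}(\mathbb{C})$ is maximal with the disjointness property), the $(P,w,\pi)$-balls of radius $tM_w+1$ must also intersect. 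This gives $R_{(P,w,\pi)}(\mathbb{C}) \le tM_w$, producing the equality $R_{(P,w,\pi)}(\mathbb{C}) = M_w\, R_{(P,\pi)}(\mathbb{C})$.

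Finally, substituting the chain-case formula $R_{(P,\pi)}(\mathbb{C}) = d_{(P,\pi)}(\mathbb{C}) - 1$ yields the claimed identity. The only subtle point (and the one I would write out most carefully) is the passage from ``enlarging to radius $tM_w+1$ forces overlap'' to an honest strict upper bound — one must verify that the intersection at $(P,\pi)$-level transfers back to the $(P,w,\pi)$-level, which follows because at radii that are non-multiples of $M_w$ the containment in Theorem~\ref{Packing radius pwpi code lemma} is strict but still nonempty in the right direction for the overlap argument. Everything else is immediate from the quoted theorem and the translation invariance of the metric.
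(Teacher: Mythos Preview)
Your approach mirrors the paper's: obtain the lower bound $R_{(P,w,\pi)}(\mathbb{C})\ge tM_w$ from the ball inclusion of Theorem~\ref{Packing radius pwpi code lemma}, argue a matching upper bound, then substitute $R_{(P,\pi)}(\mathbb{C})=d_{(P,\pi)}(\mathbb{C})-1$. The lower bound and the final substitution are exactly what the paper does.

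The genuine gap is in the upper bound, and it is fatal as written. You want to infer that $(P,w,\pi)$-balls of radius $tM_w+1$ overlap from the fact that $(P,\pi)$-balls of radius $t+1$ overlap, but the only available relation is $B_{(P,w,\pi)}(c,\,tM_w+1)\subseteq B_{(P,\pi)}(c,\,t+1)$, an inclusion in the \emph{wrong direction}: intersection of the larger balls does not force intersection of the smaller ones. Your remark that the containment is ``strict but still nonempty in the right direction'' does not repair this. The paper glosses over the very same step, simply asserting the equality ``due to Theorem~\ref{Packing radius pwpi code lemma}'', so you are not missing something that is actually written there. In fact the asserted upper bound fails for non-linear codes: with $P=(\{1,2\},\,1\prec 2)$, $k_1=k_2=1$, $\mathbb{F}_7$ with Lee weight ($M_w=3$), and $\mathbb{C}=\{(0,0),(0,3)\}$, one has $d_{(P,\pi)}(\mathbb{C})=2$, so the formula predicts $R_{(P,w,\pi)}(\mathbb{C})=3$; yet the $(P,w,\pi)$-balls of radius $4$ about the two codewords have second coordinates in $\{0,1,6\}$ and $\{2,3,4\}$ respectively and are disjoint, so $R_{(P,w,\pi)}(\mathbb{C})\ge 4$. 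The identity as stated therefore requires an extra hypothesis such as linearity of $\mathbb{C}$, which is precisely what the paper's subsequent Theorem~\ref{min dist} assumes in order to recover $R_{(P,w,\pi)}(\mathbb{C})=M_w(d_{(P,\pi)}(\mathbb{C})-1)$.
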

Now we  determine the relation between the minimum distances of a code $ \mathbb{C} \subseteq  \mathbb{F}_{q}^{N} $ with respect to $(P,w,\pi)$-metric and $(P,\pi)$-metric. 
\begin{theorem}[Minimum distance]\label{min dist}
	Let $\mathbb{C}$ be a linear  $(P,w,\pi)$-code and $m_w = \min\{ w( \alpha) : \alpha \in \mathbb{F}_q \}$ where $P$ is a chain. Then  $d_{(P,w,\pi)} (\mathbb{C}) = m_w + M_w  ( d_{(P,\pi)} (\mathbb{C}) - 1 )$ and $  R_{(P,w,\pi)} (\mathbb{C})= d_{(P,w,\pi)} (\mathbb{C}) - m_w  $.
\end{theorem}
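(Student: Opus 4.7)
The plan is to establish the first equality; the second then follows immediately, since the preceding corollary yields $R_{(P,w,\pi)}(\mathbb{C}) = M_w(d_{(P,\pi)}(\mathbb{C}) - 1)$, which coincides with $d_{(P,w,\pi)}(\mathbb{C}) - m_w$ once the first identity is known.

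The first equality I would prove by two matching inequalities. For the lower bound, let $c \in \mathbb{C} \setminus \{0\}$ and let $j$ be the maximal element of $I_c^{P,\pi}$; since $P$ is a chain, $|I_c^{P,\pi}| = j = w_{(P,\pi)}(c)$, so the chain-case formula stated at the start of Section 5 gives $w_{(P,w,\pi)}(c) = \tilde{w}^{k_j}(c_j) + (j-1)M_w$. Because $c_j \neq 0$, at least one of its coordinates carries $w$-weight at least $m_w$, forcing $\tilde{w}^{k_j}(c_j) \geq m_w$; combined with $j \geq d_{(P,\pi)}(\mathbb{C})$, minimizing over $c$ produces $d_{(P,w,\pi)}(\mathbb{C}) \geq m_w + M_w(d_{(P,\pi)}(\mathbb{C}) - 1)$.

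For the matching upper bound, I would pick a codeword $c^* \in \mathbb{C}$ realizing $w_{(P,\pi)}(c^*) = d := d_{(P,\pi)}(\mathbb{C})$, so that $c^*_d \neq 0$ and $c^*_i = 0$ for $i > d$. Selecting a nonzero coordinate $c^*_{d_t}$ of $c^*_d$ together with a scalar $\alpha \in \mathbb{F}_q$ of weight $w(\alpha) = m_w$, linearity lets me form $\widetilde{c} = (\alpha / c^*_{d_t})\, c^* \in \mathbb{C}$. The plan is then to conclude $\tilde{w}^{k_d}(\widetilde{c}_d) = m_w$, which would give $w_{(P,w,\pi)}(\widetilde{c}) = m_w + M_w(d - 1)$ and close the upper bound.

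The main obstacle will be precisely this last step: a single scaling by $\beta \in \mathbb{F}_q^*$ multiplies every coordinate of $c^*_d$ in lockstep, so although the distinguished coordinate attains $w$-weight $m_w$, the remaining coordinates of $\widetilde{c}_d$ might carry strictly larger $w$-weight and inflate $\tilde{w}^{k_d}(\widetilde{c}_d)$ beyond $m_w$. Handling this cleanly will require either a finer selection of $c^*$ from among all codewords of minimum $(P,\pi)$-weight---for instance one whose top block has essentially a single distinguished nonzero coordinate---or an additional structural property of how $\mathbb{F}_q^*$ acts on the level sets $D_r$ of $w$; making the upper bound rigorous is the technical heart of the argument.
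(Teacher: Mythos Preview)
Your overall strategy matches the paper's exactly: pick a codeword $u$ of minimum $(P,\pi)$-weight $i=d_{(P,\pi)}(\mathbb{C})$, argue that linearity lets one take the top block to satisfy $\tilde{w}^{k_i}(u_i)=m_w$, and then verify that $w_{(P,w,\pi)}(u)$ realizes the minimum $(P,w,\pi)$-weight. Your lower bound and the packing-radius deduction are the same as the paper's.

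The obstacle you flag in your last paragraph is genuine, and the paper does not resolve it either: it simply asserts ``As $\mathbb{C}$ is linear, it must be the case that $\tilde{w}^{k_i}(u_i)=m_w$'' with no further justification. Your scaling trick controls only a single coordinate of the top block, and in fact the assertion (hence the theorem as stated) fails in general. Take $\mathbb{F}_7$ with the Lee weight (so $m_w=1$, $M_w=3$), $n=1$, $k_1=2$, and $\mathbb{C}=\mathbb{F}_7\cdot(1,3)\subseteq\mathbb{F}_7^{2}$. The nonzero codewords $(1,3),(2,6),(3,2),(4,5),(5,1),(6,4)$ have $\tilde{w}^{2}$-weights $3,2,3,3,2,3$ respectively, so $d_{(P,w,\pi)}(\mathbb{C})=2$, whereas the formula predicts $m_w+M_w\bigl(d_{(P,\pi)}(\mathbb{C})-1\bigr)=1+0=1$; the second identity likewise fails here since $R_{(P,w,\pi)}(\mathbb{C})=0\neq 1$. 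Thus the upper-bound step cannot be completed without an extra hypothesis---for instance $k_i=1$ for all $i$, in which case your scaling argument does go through, since the single top-block coordinate then ranges over all of $\mathbb{F}_q^{\ast}$ as the scalar varies. Your instinct that this is the technical heart of the argument is exactly right; the paper's proof carries precisely the gap you identified.
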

\begin{proof}
	Let $ u \in \mathbb{C}$ be such that $w_{(P,\pi)}(u)=d_{(P,\pi)} (\mathbb{C})$. Let 
	$ w_{(P,\pi)}(u) = i $. As $\mathbb{C}$ is linear, it must be the case that  $\tilde{w}^{k_i}(u_i) = m_w$	where $u_i$ is the $i^{th} $ block of $ u$. We will show that
	$d_{(P,w,\pi)}(C) = w_{(P,w,\pi)} (u)$.
	If $ 0 \neq v \in \mathbb{C}$ and  $w_{(P,w,\pi)}(v) <  w_{(P,w,\pi)}(u)$,    then $   \tilde{w}^{k_j}(v_{j}) + (j-1)M_w <  \tilde{w}^{k_i}(u_{i}) + (i-1)M_w $ where $j$ is the maximum element of $supp_{\pi}(v)$.
	Since $\tilde{w}^{k_j}(v_j ) \geq \tilde{w}^{k_i}(u_i) = m_w$, it follows that $(j -1)M_w <
	(i-1)M_w$ which implies $j < i$. Hence $0 < w_{(P,\pi)}  (v) < w_{(P,\pi)}  (u)$,
	a contradiction.  Therefore,
	$ d_{(P,w,\pi)}  (\mathbb{C})= w_{(P,w,\pi)}(u) = \tilde{w}^{k_i}(u_i) + (i - 1)Mw
	= m_w + (d_{(P,\pi)}  (\mathbb{C}) - 1) M_w$.    
	As $R_{(P,w,\pi)} (\mathbb{C}) = M_w ( d_{(P,\pi)} (\mathbb{C}) - 1 ) $, we have
	$R_{(P,w,\pi)}(\mathbb{C}) = d_{(P,w,\pi)}  (\mathbb{C}) - m_w$. 
\end{proof}
When $P$ is a chain,  Singleton bound of any  $[N, k]$ linear $(P,\pi)$-code $\mathbb{C} $  \cite{bkdnsr}  is given   by  $ \sum_{i \in J} k_{i} \leq  N - \lceil log_{q}|\mathbb{C}| \rceil $ with $ |J|={d_{(P,\pi)}(\mathbb{C}) - 1}$.
 As $ M_w \geq 1$,   $d_{(P,w,\pi)} (\mathbb{C}) - 1 \geq d_{(P,w,\pi)} (\mathbb{C}) - m_w $.   From Theorem \ref{min dist},   $d_{(P,w,\pi)} (\mathbb{C}) - 1 \geq M_w  ( d_{(P,\pi)} (\mathbb{C}) - 1 ) $. Thus,  
 $ \big\lfloor \frac{d_{(P,w,\pi)} \mathbb{(C)}-1}{M_w} \big\rfloor \geq  d_{(P,\pi)} (\mathbb{C}) - 1  $. By combining this with what we have in Proposition \ref{phg}, we get  $ \big\lfloor \frac{d_{(P,w,\pi)} \mathbb{(C)}-1}{M_w} \big\rfloor =  d_{(P,\pi)} (\mathbb{C}) - 1  $. Hence, we have $ \sum\limits_{i \in J} k_{i} \leq N - k$ with $ |J|=\big\lfloor \frac{d_{(P,w,\pi)} \mathbb{(C)}-1}{M_w} \big\rfloor$,
  which is the linear code version of the  Singleton bound obtained in Theorem \ref{chainSingl}. 

\bibliographystyle{amsplain}


\end{document}